\newtheorem{thm}{Theorem}[section]
\newtheorem{lem}[thm]{Lemma}
\newtheorem{pro}[thm]{Proposition}
\numberwithin{equation}{section}
\begin{document}
\setlength{\abovedisplayskip}{6pt}
\setlength{\belowdisplayskip}{6pt}

\title{\textbf{Approximate functional equation for the derivatives of functions in Selberg class}
}
\author{Yoshikatsu Yashiro}
\date{}

\maketitle
\vspace{-2em}
\begin{abstract}
Let $F(s)$ be a function belonging to Selberg class. Chandrasekharan and Narasiman proved the approximate functional equation for $F(s)$. In this paper, we shall generalize this formula for the derivatives of $F(s)$.
\end{abstract}

\section{Introduction}
Selberg \cite{SEL} introduced a class of zeta functions satisfying the following properties: 
\begin{enumerate}[(a)]
\item The function $F(s)$ is written as an absolutely convergent Dirichlet series $F(s)=\sum_{n=1}^\infty a_F(n)n^{-s}$ for ${\rm Re\;}s>1$.  
\item There exists $m\in\mathbb{Z}_{\geq0}$ such that $(s-1)^mF(s)$ is an entire function of finite order.  
\item The function $\Phi(s)=Q^s\prod_{j=1}^q\Gamma(\lambda_js+\mu_j)F(s)$ satisfies $\Phi(s)=\omega\overline{\Phi}(1-s)$ where $q\in\mathbb{Z}_{\geq1}$, $Q\in\mathbb{R}_{>0}$, $\lambda_j\in\mathbb{R}_{>0}$, $\mu_j\in\mathbb{C}: {\rm Re\;}\mu_j\geq0$, $\omega\in\mathbb{C}: |\omega|=1$, and $\overline{X}$ denotes $\overline{X}(s)=\overline{X(\overline{s})}$.  
\item The Dirichlet coefficients $a_F(n)$ satisfy $a_F(n)=O(n^\varepsilon)$ for any $\varepsilon\in\mathbb{R}_{>0}$.
\item The function $\log F(s)$ is written as $\log{F(s)}=\sum_{n=1}^\infty{b_F(n)n^{-s}}$ where $b_F(n)$ satisfy $b_F(n)=0$ for $n\ne p^r\;(r\in\mathbb{Z}_{\geq1})$ and $b_F(n)=O(n^\theta)$ for some $\theta\in\mathbb{R}_{<1/2}$, 
\end{enumerate}
which is called the Selberg class $\mathcal{S}$. For example, we see that the Riemann zeta function $\zeta(s)=\sum_{n=1}^\infty n^{-s}$ and the $L$-function attatched to cusp forms $L_f(s)=\sum_{n=1}^\infty\lambda_f(n)n^{-s}$ belong to $\mathcal{S}$, where $f$ is a cusp form of weight $k$ given by $f(z)=\sum_{n=1}^\infty\lambda_f(n)n^{\frac{k-1}{2}}e^{2\pi inz}$. By Landau's \cite{LAN} result (see (15) of p.214), the conditions (a)--(d) of $\mathcal{S}$ imply that the average of $a_F(s)$ is approximated as 
\begin{align}
\sum_{n\leq x}a_F(n)
=\begin{cases} x\sum_{r=0}^{p_F-1}c_r(\log x)^r+O(x^{\frac{d_F-1}{d_F+1}+\varepsilon}), & p_F\in\mathbb{Z}_{\geq1}, \\ O(x^{\frac{d_F-1}{d_F+1}+\varepsilon}), & p_F\in\mathbb{Z}_{\leq0}, \end{cases} \label{LANA}
\end{align}
where $c_r\in\mathbb{C}$, $p_F$ is the order of pole at $s=1$ for $F(s)$, and $d_F$ is given by $d_F=2\sum_{j=1}^q\lambda_j$ called the degree of $F$. The conditon {\rm(c)} implies that the $m$-th derivatives of $F(s)$ holds 
\begin{align}
F^{(m)}(s)=\sum_{r=0}^m(-1)^r\binom{n}{r}\chi_F^{(m-r)}(s)\overline{F}^{(r)}(1-s) \label{FDFE}
\end{align}
for $s\in\mathbb{C}$ where $\chi_F(s)$ is given by
\begin{align}
\chi_F(s)=\omega Q^{1-2s}\prod_{j=1}^q\frac{\Gamma(\lambda_j(1-s)+\overline{\mu_j})}{\Gamma(\lambda_js+\mu_j)}. \label{FCHI}
\end{align}

Chandrasekharan and Narasiman \cite{C&N} proved the approximate functional equation for a class of zeta functions (see Theorem 2 of p.53). In the Selberg class, this equation is written as
\begin{align}
F(s)=\sum_{n\leq y}\frac{a_F(n)}{n^s}+\chi_F(s)\sum_{n\leq y}\frac{a_F(n)}{n^{1-s}}+O(|t|^{\frac{d_F}{2}(1-\sigma)-\frac{1}{d_F}}(\log y)^{p_F}) \label{AFES}
\end{align}
under the condition $a_n\geq0$ where $F\in\mathcal{S}$, $s=\sigma+it: 0\leq\sigma\leq 1, |t|\geq1$ and $y=(Q\lambda_1^{\lambda_1}\cdots\lambda_q^{\lambda_q})|t|^{{d_F}/{2}}$. 
On the other hand, the author \cite{YY3} showed the approximate functional equation for the derivatives of $L$-function attached for cusp form as
\begin{align*}
L_f^{(m)}(s)=\sum_{n\leq\frac{|t|}{2\pi}}\frac{\lambda_f(n)(-\log n)^m}{n^s}+\chi_{L_f}(s)\sum_{n\leq\frac{|t|}{2\pi}}\frac{\lambda_f(n)(-\log n)^m}{n^{1-s}}+O(|t|^{1/2-\sigma+\varepsilon}),
\end{align*}
and introduced the mean value formula for $L_f^{(m)}(s)$ as
\begin{align*}
&\int_0^T|L_f^{(m)}(\sigma+it)|^2dt\\&=
\begin{cases}
A_fT(\log T)^{2m+1}+O(T(\log T)^{2m}), & \sigma=1/2,\\ \displaystyle T\sum_{n=1}^\infty\frac{|\lambda_f(n)|^2(\log n)^{2m}}{n^{2\sigma}}+O(T^{2(1-\sigma)}(\log T)^{2m}), & \sigma\in(1/2,1), \\ \displaystyle T\sum_{n=1}^\infty\frac{|\lambda_f(n)|^2(\log n)^{2m}}{n^{2\sigma}}+O((\log T)^{2m+2}), & \sigma=1. 
\end{cases}
\end{align*}
where $\chi_{L_f}(s)=(-1)^{k/2}(2\pi)^{1-2s}\Gamma(1-s+\tfrac{k-1}{2})/\Gamma(s+\tfrac{k-1}{2})$ and $A_f$ is a positive constant depending on $f$. These results are generalized for those results for $L_f(s)$ proved by Good \cite{GD1}. 
For $F\in\mathcal{S}, \sigma>1/2$ and $T>0$, let $N_{F^{(m)}}(\sigma,T)$ be a number of zeros for $F^{(m)}(s)$ in the region ${\rm Re\;}s\geq\sigma$, $0<{\rm Im\;}s\leq T$. 
As the application of those results, the author \cite{YY4} obtained a zero-density estimate for $L_f^{(m)}(s)$, which is 
\begin{align*}
N_{L_f^{(m)}}(\sigma,T)
=O\left(\frac{T}{\sigma-1/2}\log\frac{1}{\sigma-1/2}\right) \quad (T\to\infty)
\end{align*}
for $\sigma>1/2$. This result corresponds the estimate of zero-density estimate for $\zeta^{(m)}(s)$
\begin{align*}
N_{\zeta^{(m)}}(\sigma,T)=O\left(\frac{T}{\sigma-1/2}\log\frac{1}{\sigma-1/2}\right) \quad (T\to\infty)
\end{align*}
for $\sigma>1/2$ which is shown by Aoki--Minamide \cite{A&M}. However, zero-density estimates for derivatives for zeta-function belonging to Selberg class is not known.

In this paper we study the approximate functional equation for $F^{(m)}(s)$ in order to establish tools for estimating $N_{F^{(m)}}(\sigma,T)$ in next paper. To attain the above object we shall use Good's \cite{GD1} method and the result \eqref{LANA}.  
Let $\mathcal{R}$ be a class of $C^\infty$-class functions $\varphi:[0,\infty)\to\mathbb{R}$ satisfying $\varphi(\rho)=1$ for $\rho\in[0,1/2]$ and $\varphi(\rho)=0$ for $\rho\in[2,\infty)$, which is called characteristic functions. Put $\varphi_0(\rho):=1-\varphi(1/\rho)$ and $\|\varphi^{(j)}\|_1=\int_0^\infty|\varphi^{(j)}(\rho)|d\rho$ where $\varphi^{(j)}$ is the $j$-th derivative function of $\varphi\in\mathcal{R}$. 
Then we see that $\varphi_0\in\mathcal{R}$ and $\|\varphi^{(j)}\|_1<\infty$. For $j\in\mathbb{Z}_{\geq0}$, $r\in\{0,\dots,m\}$, $\rho\in\mathbb{R}_{>0}$ and $s\in\mathbb{C}: |t|\gg1$, we define 
\begin{align}
{{\gamma}_{j}^{(r)}}(s;\rho):=&\frac{1}{2\pi i}\int_{\mathcal{F}}\frac{{g_{F}}(s+w)}{{g_{F}}(s)}\frac{1}{w\cdots(w+j)}{\frac{\chi_F^{(r)}}{\chi_F}}(1-(s+w))\times\notag\\
&\times\prod_{j=1}^q\frac{\Gamma(\lambda_j(s+w)+{\mu_j})}{\Gamma(\lambda_js+{\mu_j})}(\rho e^{-\frac{\pi}{2}i{\rm sgn\;}(t)})^{\frac{d_F}{2}w}dw, \label{DEFGM}
\\{{\delta}_{j}^{(r)}}(s;\rho):=&\frac{1}{2\pi i}\int_{\mathcal{F}}\frac{\overline{g_{F}}(s+w)}{\overline{g_{F}}(s)}\frac{1}{w\cdots(w+j)}{\frac{\chi_F^{(r)}}{\chi_F}}(1-(s+w))\times\notag\\&\times\prod_{j=1}^q\frac{\Gamma(\lambda_j(s+w)+\overline{\mu_j})}{\Gamma(\lambda_js+\overline{\mu_j})}(\rho e^{-\frac{\pi}{2}i{\rm sgn\;}(t)})^{\frac{d_F}{2}w}dw \label{DEFDM}
\end{align}
where ${\rm sgn}(t)=\pm1$ if $t\gtrless 0$, $\mathcal{F}=\{(1/2\pm 1)-\sigma+\sqrt{|t|}e^{i\pi(\mp1/2+\theta)}\mid \theta\in[0,1]\}\cup\{u-\sigma\pm i\sqrt{|t|}\mid u\in[-1/2,3/2]\}$ (double-sign corresponds), and $g_F(s)$ is given by
\begin{align}
g_{F}(s)=&\left(s(1-s)\right)^{p_F+m}\textstyle\prod_{j=1}^q(A_j(s)\overline{A_j}(1-s))^{m+1}. 
\label{DEFG}
\end{align}
Here $A_{j}(s)$ is given by
\begin{align}
A_j(s)&=\begin{cases}  1, & {\rm Re\;}\mu_j>\lambda_j/2, \\[-0.2em] \prod_{n=0}^{\nu_j}(\lambda_js+\mu_j-n),  & {\rm Re\;}\mu_j\leq\lambda_j/2 \end{cases} \label{DEFA} 
\end{align}
where $\nu_j=0$ when ${\rm Re\;}\mu_j>\lambda_j/2$ and $\nu_j=[\lambda_j/2-{\rm Re\;}\mu_j]$ when ${\rm Re\;}\mu_j\leq\lambda_j/2$. 

Then we obtain the approximate functional equation for $F^{(m)}(s)$ containing characteristic functions:
\begin{thm}\label{THM1}
For any $F\in\mathcal{S}$, $\varphi\in\mathcal{R}$, $m\in\mathbb{Z}_{\geq0}$, $l\in\mathbb{Z}_{>M_F}$, $s\in\mathbb{C}: \sigma\in[0,1], |t|\gg1$ and $y_1,y_2\in\mathbb{R}_{>0}: y_1y_2=(Q\lambda_1^{\lambda_1}\cdots\lambda_q^{\lambda_q})^2|t|^{d_F}$, we have  
\begin{align*}
F^{(m)}(s)=&\sum_{n=1}^\infty\frac{a_F(n)(-\log n)^m}{n^s}\varphi\left(\frac{n}{y_1}\right)+\\
&+\sum_{r=0}^m(-1)^r\binom{m}{r}\chi_f^{(m-r)}(s)\sum_{n=1}^\infty\frac{\overline{a_F(n)}(-\log n)^m}{n^{1-s}}\varphi_0\left(\frac{n}{y_2}\right)+R_\varphi(s)
\end{align*}
where $M_F$ is some positive constant and $R_\varphi(s)$ is given by
\begin{align*}
&\hspace{-1.2em}R_\varphi(s)\\
:=&\sum_{\frac{y_1}{2}\leq n\leq 2y_1}\frac{a_F(n)(-\log n)^m}{n^s}\sum_{j=1}^l\varphi^{(j)}\left(\frac{n}{y_1}\right)\left(-\frac{n}{y_1}\right)^r\gamma_{j}^{(m)}\left(s;\frac{1}{(\lambda_1^{\lambda_1}\cdots\lambda_q^{\lambda_q})^{\frac{2}{d_F}}|t|}\right)\times\\
&+\chi_F(s)\sum_{r=0}^m(-1)^r\binom{m}{r}\sum_{\frac{y_2}{2}\leq n\leq 2y_2}\frac{\overline{a_F(n)}(-\log n)^r}{n^{1-s}}\times\\
&\times\sum_{j=1}^l\varphi^{(j)}\left(\frac{n}{y_2}\right)\left(-\frac{n}{y_2}\right)^j{\delta_{j}^{(r)}}\left(1-s;\frac{1}{(\lambda_1^{\lambda_1}\cdots\lambda_q^{\lambda_q})^{\frac{2}{d_F}}|t|}\right)+\\
&+O\left(y_1^{1-\sigma}(\log y_1)^{m+\max\{p_F-1,0\}}|t|^{-\frac{l}{2}}\|\varphi^{(l+1)}\|_1\right)+\\
&+O\left(y_2^\sigma|t|^{d_F(\frac{1}{2}-\sigma)-\frac{l}{2}}\|\varphi_0^{(l+1)}\|_1\sum_{r=0}^m(\log y_2)^{r+\max\{p_F-1,0\}}(\log|t|)^{m-r}\right).
\end{align*}
\end{thm}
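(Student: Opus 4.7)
The plan is to adapt Good's \cite{GD1} smooth Mellin-Barnes approach to the Selberg-class setting, combining it with the functional equation \eqref{FDFE} for the derivatives and Landau's bound \eqref{LANA} for partial sums. For $\mathrm{Re}\,s$ sufficiently large, the smoothed sum
\[
 S_1(s):=\sum_{n=1}^\infty\frac{a_F(n)(-\log n)^m}{n^s}\varphi\!\left(\frac{n}{y_1}\right)
\]
equals $\tfrac{1}{2\pi i}\int_{(c)}F^{(m)}(s+w)\,y_1^w\,\tilde\varphi(w)\,dw$, where $\tilde\varphi(w)=\int_0^\infty\varphi(\rho)\rho^{w-1}d\rho$ is meromorphic on $\mathbb{C}$ with only a simple pole of residue $\varphi(0)=1$ at $w=0$. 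An analogous representation holds for the $\overline{F}^{(r)}$-sum with $\varphi_0,\tilde\varphi_0$ and $y_2$ in place of $\varphi,\tilde\varphi,y_1$; the identity $\int_0^\infty(1-\varphi(1/\rho))\rho^{w-1}d\rho=\tilde\varphi(-w)$ links the two.

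I would then shift the $w$-contour onto the path $\mathcal{F}$ of \eqref{DEFDM} after multiplying and dividing the integrand by $g_F(s+w)/g_F(s)$. The residue at $w=0$ supplies the leading $F^{(m)}(s)$ term, and no other residues arise: the factors $(s+w)^{p_F+m}(1-s-w)^{p_F+m}$ in $g_F$ (see \eqref{DEFG}) annihilate the pole of $F$ at $1$ and its derivatives, while $\prod_j(A_j(s+w)\overline{A_j}(1-s-w))^{m+1}$ from \eqref{DEFA} annihilates the poles of $\chi_F^{(r)}/\chi_F$ and of the gamma ratios entering at the next step. Applying \eqref{FDFE} to $F^{(m)}(s+w)$ inside the integrand, the Stirling asymptotic
\[
 \chi_F(s+w)/\chi_F(s)\sim(Q\lambda_1^{\lambda_1}\cdots\lambda_q^{\lambda_q})^{-2w}|t|^{-d_Fw/2}e^{-\frac{\pi i}{2}\mathrm{sgn}(t)d_Fw/2}
\]
converts $\chi_F(s+w)\,n^w\,y_1^w$ into $\chi_F(s)(n/y_2)^w$ modulo a correction absorbed by $\prod\Gamma(\lambda_j(s+w)+\mu_j)/\Gamma(\lambda_js+\mu_j)$, which is precisely the factor appearing in the definitions \eqref{DEFGM}-\eqref{DEFDM} of $\gamma_j^{(r)}$ and $\delta_j^{(r)}$. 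Exchanging the contour integral with the Dirichlet expansion of $\overline{F}^{(r)}(1-s-w)$ and invoking the $\varphi_0$-Mellin identity above recovers the second main sum of the theorem, and the residual contour integrals on $\mathcal{F}$ are exactly the $\gamma_j^{(m)}(s;\cdot)$ and $\delta_j^{(r)}(1-s;\cdot)$ as defined.

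To extract from these residuals the explicit $\varphi^{(j)}$-terms in $R_\varphi(s)$, I would iterate the integration-by-parts identity
\[
 \tilde\varphi(w)=\frac{(-1)^l}{w(w+1)\cdots(w+l-1)}\int_0^\infty\varphi^{(l)}(\rho)\rho^{w+l-1}d\rho
\]
(and its $\varphi_0$-analogue) $l$ times, producing at the $j$-th step a boundary contribution of shape $\varphi^{(j)}(n/y_i)(-n/y_i)^j$ supported on $n\in[y_i/2,2y_i]$ together with one further factor $1/(w+j-1)$ in the kernel --- exactly the $1/(w\cdots(w+j))$ structure of \eqref{DEFGM}-\eqref{DEFDM}. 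After the $(l+1)$-st integration by parts, estimating the remaining integral by combining Stirling along $\mathcal{F}$ with \eqref{LANA} applied to $\sum_{n\leq x}a_F(n)$ yields the two big-$O$ remainders: the shape $\|\varphi^{(l+1)}\|_1|t|^{-l/2}$ comes from the $l$-fold factor $1/(w\cdots(w+l))$ across the $O(\sqrt{|t|})$-scaled contour $\mathcal{F}$, and the weight $(\log y_i)^{\max\{p_F-1,0\}}$ from the leading $x\sum c_r(\log x)^r$ term of Landau's estimate.

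The main obstacle will be the holomorphy bookkeeping at the contour shift: one must verify that $g_F$ as built from \eqref{DEFA}-\eqref{DEFG} cancels every pole of the integrand enclosed between the original vertical line and $\mathcal{F}$, for each of the $m+1$ pieces produced by \eqref{FDFE}, so that $w=0$ is the only residue contribution. A secondary subtlety is fixing $M_F$ large enough that for $l>M_F$ the Stirling remainder for $\chi_F(s+w)/\chi_F(s)$ is uniformly $O(|t|^{-l/2})$ along $\mathcal{F}$; only then do the two $O$-terms of $R_\varphi(s)$ come out with the stated exponents. The remaining estimation is routine once these points are settled, and assembling the extracted main sums with the bounded residual integrals yields Theorem \ref{THM1}.
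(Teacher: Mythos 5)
Your proposal follows essentially the same route as the paper: Good's Mellin--Barnes method, with the smoothed sums tied to contour integrals of $F^{(m)}(s+w)$ against $K_\varphi(w)/w$, the regularizing factor $g_F(s+w)/g_F(s)$ built from \eqref{DEFG}--\eqref{DEFA} to kill all poles except $w=0$, the functional equation \eqref{FDFE} to produce the dual sum, repeated integration by parts on the Mellin transform to generate the $\varphi^{(j)}(n/y_i)(-n/y_i)^j$ terms with the kernels \eqref{DEFGM}--\eqref{DEFDM} on the contour $\mathcal{F}$, and Landau's bound \eqref{LANA} for the two error terms. The paper merely runs the computation in the opposite order (starting from the residue identity for $F^{(m)}(s)$ in Proposition \ref{PRO1} and then approximating $G_r$, $H_r$ in Proposition \ref{PRO2}), and the points you flag as the main obstacles --- the holomorphy bookkeeping for $g_F$ and the choice of $M_F$ --- are exactly the ones the paper verifies in detail.
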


Next introducing new functions $\varphi_\alpha\in\mathcal{R}$ and $\xi\not\in\mathcal{R}$, replacing $\varphi$ to $\varphi_\alpha$ in the above theorem, and choosing $\alpha\in\mathbb{R}_{>0}$ in order to minimize the error term, we obtain the approximate functional equation for $F(s)$:
\begin{thm}\label{THM2}
For any $F\in\mathcal{S}$, $m\in\mathbb{Z}_{\geq0}$ and $s=\sigma+it: 0\leq\sigma\leq1, |t|\gg1$ we have 
\begin{align}
F^{(m)}(s)=&\sum_{n\leq y_1}\frac{a_F(n)(-\log n)^m}{n^s}+\sum_{r=0}^m(-1)^r\binom{m}{r}\chi_F^{(m-r)}(s)\sum_{n\leq y_2}\frac{\overline{a_F(n)}(-\log n)^r}{n^{1-s}}+\notag\\
&+O(y_1^{1-\sigma+\varepsilon}|t|^{-\frac{1}{2}})+O(y_2^{\sigma+\varepsilon}|t|^{d_F(\frac{1}{2}-\sigma)-\frac{1}{2}}) \label{NCAFE}
\end{align}
where $y_1, y_2\in\mathbb{R}_{>0} : y_1y_2=(Q\lambda_1^{\lambda_1}\cdots\lambda_q^{\lambda_q})^2|t|^{d_F}$. 
By choosing \\$y_1=y_2=(Q\lambda_1^{\lambda_1}\cdots\lambda_q^{\lambda_q})|t|^{d_F/2}$, the error terms of \eqref{NCAFE} are $O(|t|^{\frac{d_F}{2}(1-\sigma)-\frac{1}{2}+\varepsilon})$. 
\end{thm}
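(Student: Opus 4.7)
The strategy is to apply Theorem \ref{THM1} with a one-parameter family $\varphi_\alpha\in\mathcal{R}$ whose transition from $1$ to $0$ is concentrated in a window of width $\alpha$, and to optimise $\alpha$ (together with the truncation parameter $l$) so that the two smoothed sums turn into the sharp truncations of \eqref{NCAFE}, while the remainder $R_{\varphi_\alpha}$ is absorbed into the target $|t|^{-1/2+\varepsilon}$-errors.

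Fix a nonnegative bump $\xi\in C^\infty_c((-1,1))$ with $\int\xi=1$---this is the auxiliary $\xi\notin\mathcal{R}$ alluded to in the statement---and, for $\alpha\in(0,1/2)$, set
\begin{equation*}
\varphi_\alpha(\rho):=\int_{(\rho-1)/\alpha}^{\infty}\xi(u)\,du.
\end{equation*}
Then $\varphi_\alpha\in\mathcal{R}$, $\varphi_\alpha(\rho)=\mathbf{1}_{[0,1]}(\rho)$ whenever $\rho\notin[1-\alpha,1+\alpha]$, and $\|\varphi_\alpha^{(j)}\|_1\ll_j\alpha^{1-j}$ for $j\geq 1$; the companion $\varphi_{0,\alpha}(\rho)=1-\varphi_\alpha(1/\rho)$ has symmetric properties. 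Substituting $\varphi=\varphi_\alpha$ into Theorem \ref{THM1} and writing the smoothed sums as sharp truncations plus a transition-window correction, Abel summation together with the Landau estimate \eqref{LANA} bounds these corrections by $O(y_1^{1-\sigma+\varepsilon}\alpha)$ and, using the standard Stirling bound $\chi_F^{(m-r)}(s)\ll|t|^{d_F(1/2-\sigma)}(\log|t|)^{m-r}$, by $O(y_2^{\sigma+\varepsilon}|t|^{d_F(1/2-\sigma)}\alpha)$.

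For the remainder $R_{\varphi_\alpha}$, the two explicit Theorem \ref{THM1} errors carry $\|\varphi_\alpha^{(l+1)}\|_1\ll_l\alpha^{-l}$ and become $O(y_1^{1-\sigma+\varepsilon}|t|^{-l/2}\alpha^{-l})$ and $O(y_2^{\sigma+\varepsilon}|t|^{d_F(1/2-\sigma)-l/2}\alpha^{-l})$. The two short sums of length $\asymp\alpha y_i$ weighted by $\gamma_j^{(m)}$ and $\delta_j^{(r)}$ are treated by first establishing a uniform estimate $\gamma_j^{(m)}(s;\rho),\,\delta_j^{(r)}(s;\rho)\ll_l(\log|t|)^{O(m)}$ for $\rho\asymp|t|^{-1}$ and $1\leq j\leq l$: one shifts the contour in \eqref{DEFGM}--\eqref{DEFDM} and applies Stirling's formula on $\mathcal{F}$, the keyhole shape of $\mathcal{F}$ and the explicit prefactor $g_F(s+w)/g_F(s)$ from \eqref{DEFG} ensuring that every pole of the surrounding $\Gamma$-quotients and of $\chi_F^{(r)}/\chi_F$ is kept off the contour. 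Combined with \eqref{LANA}, these short sums are then dominated by the explicit errors above. Choosing $\alpha=|t|^{-l/(2(l+1))}$ balances the transition error against the explicit error; taking $l$ large enough that $1/(2(l+1))<\varepsilon$ then yields the two errors of \eqref{NCAFE} (with $\varepsilon$ absorbing the residual factor $|t|^{1/(2(l+1))}$). The final sentence of the theorem is the immediate specialisation $y_1=y_2=(Q\lambda_1^{\lambda_1}\cdots\lambda_q^{\lambda_q})|t|^{d_F/2}$ forced by the constraint on $y_1y_2$.

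The principal difficulty is the uniform estimation of $\gamma_j^{(m)}$ and $\delta_j^{(r)}$ on the compound contour $\mathcal{F}$: the integrand mixes $g_F(s+w)/g_F(s)$, several $\Gamma$-quotients of distinct growth rates, the derivative factor $\chi_F^{(r)}/\chi_F$, and the oscillatory piece $(\rho e^{-\pi i\,\mathrm{sgn}(t)/2})^{d_Fw/2}$, and the bound must remain uniform in $j$ up to $l$. Once these bounds are in hand, the remainder of the argument is a routine smoothed-cutoff adaptation of the Chandrasekharan--Narasimhan derivation of \eqref{AFES}.
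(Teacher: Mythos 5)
Your strategy is the same as the paper's: substitute a smoothed cutoff $\varphi_\alpha$ whose transition window shrinks like a negative power of $|t|$ into Theorem \ref{THM1}, compare the smoothed sums with the sharp truncations via \eqref{LANA} and the bound $\chi_F^{(m-r)}(s)\ll|t|^{d_F(\frac12-\sigma)}(\log|t|)^{m-r}$, and balance the window error against the $\|\varphi_\alpha^{(l+1)}\|_1$-errors by taking $l$ large. (Your width parameter $\alpha$ is the paper's $|t|^{-\alpha}$; your choice $\alpha=|t|^{-l/(2(l+1))}$ is the exact balance point where the paper takes $\alpha=1/2-\varepsilon$ and $l\geq1/(2\varepsilon)$, and both land on $|t|^{-1/2+\varepsilon}$.)

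There is, however, a genuine gap in your treatment of the short sums weighted by $\gamma_j^{(m)}$ and $\delta_j^{(r)}$. You propose the bound $\gamma_j^{(m)}(s;\rho),\,\delta_j^{(r)}(s;\rho)\ll_l(\log|t|)^{O(m)}$, uniform in $1\leq j\leq l$, and claim the short sums are then dominated by the explicit errors. They are not: the $j$-th short sum carries the factor $\varphi_\alpha^{(j)}(n/y_1)(-n/y_1)^j\ll\alpha^{-j}$ over a window of $\asymp\alpha y_1$ integers, so with your bound it is $\ll y_1^{1-\sigma+\varepsilon}\alpha^{1-j}(\log|t|)^{O(m)}$, which for $j\geq2$ and $\alpha=|t|^{-l/(2(l+1))}$ is a positive power of $|t|$ times $y_1^{1-\sigma+\varepsilon}$ --- worse than the trivial bound and nowhere near the target error. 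What is needed, and what the paper's Lemma \ref{RCDE} supplies, is decay in $j$: $\gamma_j^{(m)},\,\delta_j^{(r)}\ll|t|^{-j/2}(\log|t|)^m$, which follows because $|w|\asymp\sqrt{|t|}$ on $\mathcal{F}$, so $1/|w(w+1)\cdots(w+j)|\ll|t|^{-(j+1)/2}$ while $\mathcal{F}$ has length $O(\sqrt{|t|})$ and the $\Gamma$-quotient stays bounded by Lemma \ref{GFEST}. This $|t|^{-j/2}$ exactly cancels $\alpha^{-j}\leq|t|^{j/2}$ (this is the content of \eqref{MS0E}--\eqref{MTRE}); without it the sum over $j$ destroys the argument. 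The contour-plus-Stirling method you sketch would in fact produce this decay, but the bound you actually state does not, and the proof as written fails at this step.
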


As an example of Theorem \ref{THM2}, we shall give the approximate functional equation for derivatives of Rankin-Selberg $L$-function. Let $f$ and $g$ be holomorphic cusp forms of weight $k$ with respect to $SL_2(\mathbb{Z})$ given by  $f(z)=\sum_{n=1}^\infty\lambda_f(n)n^{\frac{k-1}{2}}e^{2\pi inz}$ and $g(z)=\sum_{n=1}^\infty\lambda_g(n)n^{\frac{k-1}{2}}e^{2\pi inz}$, the Rankin-Selberg $L$-function is defined by $$L_{f\times g}(s)=\sum_{n=1}^\infty\frac{\lambda_f(n)\overline{\lambda_g(n)}}{n^{-s}}.$$ By results of Rankin \cite{RAN}, Selberg \cite{SEL2} and Delinge \cite{DEL}, we see that $L_{f\times g}(s)$ belongs to $\mathcal{S}$. Hence we obtain 
\begin{align*}
L_{f\times g}^{(m)}(s)=&\sum_{n\leq\frac{|t|^2}{4\pi^2}}\frac{\lambda_{f\times g}(n)(-\log n)^m}{n^s}+\\
&+\sum_{r=0}^m(-1)^r\binom{m}{r}\chi_{L_{f\times g}}^{(m-r)}(s)\sum_{n\leq\frac{|t|^2}{4\pi^2}}\frac{{\overline{\lambda_{f\times g}(n)}}(-\log n)^r}{n^{1-s}}+O(|t|^{\frac{3}{2}-2\sigma+\varepsilon})
\end{align*}
where $m\in\mathbb{Z}_{\geq0}$, $s=\sigma+it: \sigma\in[0,1], |t|\gg1$, and $\chi_{L_{f\times g}}(s)$ is given by 
$$\chi_{L_{f\times g}}(s)=(2\pi)^{4s-2}\frac{\Gamma({1-s})\Gamma(1-s+k-1)}{\Gamma(s)\Gamma(s+k-1)}.$$ 

 In next section we shall show preliminary lemmas to prove Theorems \ref{THM1} and \ref{THM2}. Using these lemmas we shall give Theorems \ref{THM1} and \ref{THM2} in Section 3 and 4 respectively. 

\newpage

\section{Some Lemmas}\label{LEMP}

First for $\varphi\in\mathcal{R}$ we define 
\begin{align*}
K_\varphi(w):=&w\int_0^\infty\varphi(\rho)\rho^{w-1}d\rho\quad({\rm Re\;}w>0). 
\end{align*}
Then $K_\varphi(w)$ has the following properties: 
\begin{lem}[{\cite[LEMMA 3]{GD1}}]\label{KPL}
The function $K_\varphi(w)$ is analytically continued to the whole $w$-plane. Furthermore $K_\varphi(w)$ has the functional equation 
\begin{align}
K_\varphi(w)=K_{\varphi_0}(-w). \label{KPW1}
\end{align}
and the integral representation 
\begin{align}
K_\varphi(w)=\frac{(-1)^{l+1}}{(w+1)\cdots(w+l)}\int_0^\infty\varphi^{(l+1)}(\rho)\rho^{w+l}d\rho \label{KPW2}
\end{align}
for any $l\in\mathbb{Z}_{\geq0}$. Especially $K_\varphi(0)=1$.  
\end{lem}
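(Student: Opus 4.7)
The plan is to derive the integral representation \eqref{KPW2} first: it simultaneously furnishes the analytic continuation and the value $K_\varphi(0)=1$, after which the functional equation \eqref{KPW1} follows from a change of variables.

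First I will integrate by parts in the defining integral, taking $u=\varphi(\rho)$ and $dv=\rho^{w-1}d\rho$. The boundary contribution at $\rho=\infty$ vanishes because $\varphi$ is supported in $[0,2]$, while the contribution at $\rho=0$ vanishes whenever $\mathrm{Re}\,w>0$, since $\varphi(0)=1$ and $\rho^w\to 0$. The result is
\[
K_\varphi(w)=-\int_0^\infty\varphi'(\rho)\rho^w\,d\rho,
\]
which is the $l=0$ case of \eqref{KPW2}. Because $\varphi$ is constant on $[0,1/2]$ and on $[2,\infty)$, the derivatives $\varphi^{(j)}$ for $j\geq 1$ are all supported in $[1/2,2]$; successive integrations by parts therefore produce no boundary terms, and $l$ further applications yield \eqref{KPW2} for general $l$. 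Since $\varphi^{(l+1)}$ has compact support, the integral on the right converges for every $w\in\mathbb{C}$, which realises $K_\varphi$ as an entire function on the whole $w$-plane. Setting $l=0$ and $w=0$ gives
\[
K_\varphi(0)=-\int_0^\infty\varphi'(\rho)\,d\rho=-\bigl(\varphi(\infty)-\varphi(0)\bigr)=1.
\]

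For the functional equation I would apply the same single integration by parts to $K_{\varphi_0}(-w)$; this is valid for $\mathrm{Re}\,w<0$, in which case the boundary at $\rho=0$ is again killed by $\rho^{-w}\to 0$ together with $\varphi_0(0)=1$. This gives $K_{\varphi_0}(-w)=-\int_0^\infty\varphi_0'(\rho)\rho^{-w}d\rho$. Differentiating $\varphi_0(\rho)=1-\varphi(1/\rho)$ yields $\varphi_0'(\rho)=\varphi'(1/\rho)/\rho^{2}$, and the substitution $u=1/\rho$ then transforms the integrand to $\varphi'(u)u^{w}$, so $K_{\varphi_0}(-w)=-\int_0^\infty\varphi'(u)u^{w}du=K_\varphi(w)$ by the once-integrated representation established above. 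Both sides being entire functions of $w$, the identity extends to all of $\mathbb{C}$.

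The only delicate point is the justification of the boundary terms in the very first integration by parts, which is why one must restrict $\mathrm{Re}\,w>0$ (for $K_\varphi$) or $\mathrm{Re}\,w<0$ (for $K_{\varphi_0}(-w)$) at the outset; once that single step is performed, the compact support of $\varphi^{(j)}$ for $j\geq 1$ removes all further constraints on $w$, and the remainder of the argument is a routine iteration of integration by parts together with a reciprocal substitution.
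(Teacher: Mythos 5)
Your proof is correct. The paper does not actually prove this lemma --- it imports it verbatim from Good \cite[Lemma 3]{GD1} --- and your argument (a first integration by parts for $\mathrm{Re}\,w>0$ giving $K_\varphi(w)=-\int_0^\infty\varphi'(\rho)\rho^w\,d\rho$, iteration using the compact support of $\varphi^{(j)}$ in $[1/2,2]$ to get \eqref{KPW2} and the entire continuation, and the substitution $\rho\mapsto 1/\rho$ for \eqref{KPW1}) is exactly the standard proof one would find there, with all boundary terms and domains of validity handled correctly.
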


Next to approximate $(\chi_F^{(r)}/\chi_F)(s)$, we shall use the following lemma: 
\begin{lem}[{\cite[Lemma 2.3]{YY3}}]\label{LIB}
Let $F$ and $G$ be holomorphic functions in the region in $D$ satisfying $\log F(s)=G(s)$ and $F(s)\ne0$ for $s\in D$. Then for any $r\in\mathbb{Z}_{\geq1}$ there exist $\ell_1,\dots,\ell_r\in\mathbb{Z}_{\geq0}$ and $C_{\ell_1,\dots,\ell_r}\in\mathbb{Z}_{\geq0}$ such that 
\begin{align}
\frac{F^{(r)}}{F}(s)
=\sum_{1\ell_1+\cdots+r\ell_r=r}C_{\ell_1,\dots,\ell_r}(G^{(1)}(s))^{\ell_1}\cdots(G^{(r)}(s))^{\ell_r}.
\end{align}
for $s\in D$. Especially $C_{r,0\cdots,0}=1$. 
\end{lem}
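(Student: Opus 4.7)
The plan is to argue by induction on $r$. The base case $r=1$ is immediate: from $G(s)=\log F(s)$ we get $F^{(1)}/F = G^{(1)}$, so only the partition $\ell_1=1$ contributes, with $C_1=1$; in particular the ``$C_{r,0,\ldots,0}=1$'' clause holds here.

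For the inductive step, I would encode the inductive hypothesis by writing $F^{(r)}(s) = F(s)\cdot P_r\bigl(G^{(1)}(s),\ldots,G^{(r)}(s)\bigr)$, where
\[
P_r(X_1,\ldots,X_r) = \sum_{\ell_1+2\ell_2+\cdots+r\ell_r=r} C_{\ell_1,\ldots,\ell_r}\, X_1^{\ell_1}\cdots X_r^{\ell_r}
\]
has non-negative integer coefficients and all of its monomials are of weight $r$. Differentiating once, using $F'=FG^{(1)}$, and dividing by $F$ yields
\[
\frac{F^{(r+1)}}{F} = G^{(1)} P_r + \sum_{k=1}^{r} G^{(k+1)}\,\frac{\partial P_r}{\partial X_k}\bigl(G^{(1)},\ldots,G^{(r)}\bigr).
\]
Two points then require verification. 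First, every monomial produced on the right has weight $r+1$: multiplication by $G^{(1)}$ raises $\ell_1$ by $1$, contributing $+1$ to the weight, while the operation $\partial/\partial X_k$ followed by multiplication by $G^{(k+1)}$ lowers $\ell_k$ by $1$ and raises $\ell_{k+1}$ by $1$, contributing $-k+(k+1)=+1$. Second, non-negative integrality of the coefficients is preserved at each step, since $\partial/\partial X_k$ multiplies each coefficient by the non-negative integer $\ell_k$.

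To track the distinguished coefficient, observe that $(G^{(1)})^{r+1}$ can appear in $F^{(r+1)}/F$ only through $G^{(1)}\cdot(G^{(1)})^r$ in the first summand, whose coefficient is $1$ by the inductive hypothesis; the terms in the $k$-sum cannot produce $(G^{(1)})^{r+1}$ because each carries an explicit factor $G^{(k+1)}$ with $k+1\geq 2$. Hence $C_{r+1,0,\ldots,0}=1$, completing the induction. I do not anticipate any serious obstacle: the content of the lemma is essentially a reformulation of Fa\`a di Bruno's formula (equivalently, of the recursion for the complete exponential Bell polynomials), and the inductive step reduces to the bookkeeping outlined above.
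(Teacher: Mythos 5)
Your proof is correct. The paper does not actually prove this lemma---it imports it verbatim by citing Lemma 2.3 of \cite{YY3}---so there is no internal proof to compare against; your induction (write $F^{(r)}=F\cdot P_r(G^{(1)},\dots,G^{(r)})$, differentiate, use $F'=FG^{(1)}$, check that the weight grading $\ell_1+2\ell_2+\cdots+r\ell_r=r$ and the non-negative integrality of the coefficients are both preserved under $X_k\mapsto \ell_k X_k^{-1}\cdot X_{k+1}\cdot(\cdot)$, and isolate the $X_1^{r+1}$ monomial) is the standard Fa\`a di Bruno/Bell-polynomial recursion and establishes every clause of the statement, including $C_{r,0,\dots,0}=1$.
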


Before approximating $(\chi_F^{(r)}/\chi_F)(s)$ we shall show the following formulas:
\begin{lem}\label{HAFF}
Let $D=\{z\in\mathbb{C} \mid {\rm Re\:}z<\delta, |{\rm Im\:}z|<1\}$ where $\delta\in\mathbb{R}_{\geq0}$. 
For any $s\in\mathbb{C}\setminus D$ we satisfy the following formulas: 
\begin{enumerate}[(i)]
\item $\displaystyle \sum_{n=1}^\infty\frac{1}{(s+n)^l}=\begin{cases} O(|t|^{-(l-1)}), & |t|\gg1, \\ O(1),  & |t|\ll1, \end{cases}$ \ where $l\in\mathbb{Z}_{\geq2}$.
\item $\displaystyle \sum_{n=1}^\infty\left(\frac{1}{s+n}-\frac{1}{n}\right)=\begin{cases} -\log|t|+i{\pi}{\rm sgn}(t)/2-\gamma+O(|t|^{-1}),& |t|\gg1, \\ O(1), & |t|\ll1,  \end{cases}$\\ 
where $\gamma$ is the Euler's constant given by $\gamma=1-\int_1^\infty\{u\}u^{-2}du$. 
\end{enumerate}
\end{lem}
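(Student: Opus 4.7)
The plan is to recognize each sum as a classical special function and invoke standard asymptotics. For (i) the series is essentially the Hurwitz zeta $\zeta(l, s+1)$, so a direct estimate suffices. For (ii) the Weierstrass product for $1/\Gamma$ gives the identity
\begin{align*}
\sum_{n=1}^\infty\left(\frac{1}{s+n}-\frac{1}{n}\right) = -\psi(s+1) - \gamma,
\end{align*}
reducing everything to the digamma function.

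For part (i) with $|t| \gg 1$, I would argue by a two-range split at $n = |t|$: for $1 \le n \le |t|$ use $|s+n| \ge |t|$ to bound each term by $|t|^{-l}$, giving a contribution of size $|t|\cdot|t|^{-l} = |t|^{1-l}$; for $n > |t|$ use $|s+n| \ge n$ and compare the tail to $\int_{|t|}^\infty u^{-l}\,du = O(|t|^{1-l})$. For $|t| \ll 1$ with $s \notin D$, the series converges uniformly on the relevant compact region (which is bounded away from the poles at the non-positive integers by the definition of $D$), and hence is $O(1)$.

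For part (ii) with $|t| \gg 1$, I would substitute the Stirling-type asymptotic $\psi(z) = \log z + O(|z|^{-1})$ (valid in any sector $|\arg z| \le \pi - \delta$) at $z = s+1$. Then I expand $\log(s+1) = \log s + O(|s|^{-1})$, write $\log s = \log|s| + i\arg s$, and use $\log|s| = \log|t| + O(|t|^{-2})$ together with $\arg s = \tfrac{\pi}{2}{\rm sgn}(t) + O(|t|^{-1})$. These three approximations collapse into the single $O(|t|^{-1})$ error claimed in the statement. For $|t| \ll 1$, boundedness follows because $\psi(s+1)$ is analytic on the compact set $\{s \notin D : |t|\le 1\}$, since $s \notin D$ keeps $s+1$ away from the poles of $\psi$ at $0,-1,-2,\ldots$

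The only real obstacle is bookkeeping of errors in the $|t|\gg 1$ case of (ii): three separate approximations---$\log(s+1)$ versus $\log s$, $\log|s|$ versus $\log|t|$, and $\arg s$ versus its limit $\tfrac{\pi}{2}{\rm sgn}(t)$---must each be tracked and combined so that the final remainder stays of size $O(|t|^{-1})$. Beyond this accounting, everything is a routine manipulation of the digamma asymptotic, with no analytic difficulty once the identification with $\psi(s+1)$ has been made.
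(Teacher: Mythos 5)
Your proposal is essentially the paper's argument in different clothing: the paper uses partial (Stieltjes) summation to rewrite each sum as an integral, splits off the elementary pieces, and arrives at exactly the identity you quote, namely that the sum in (ii) equals $-\log(s+1)-\gamma+O(|t|^{-1})$, i.e.\ $-\psi(s+1)-\gamma$ with its Stirling asymptotic rederived by hand. Citing the digamma function directly is cleaner and perfectly legitimate; the paper's route has the small advantage of being self-contained and of producing, via the explicit inequalities \eqref{IFTE}, a bound in terms of $|s|$ rather than $|t|$ in part (i), which is then weakened to $|t|$.

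Two points deserve attention. First, in (i) your split at $n=|t|$ with the bound $|s+n|\geq n$ for $n>|t|$ tacitly assumes ${\rm Re\;}s$ is not large and negative: if $\sigma\ll -|t|$ the terms with $n$ near $-\sigma$ satisfy only $|s+n|\geq|t|$, and there are about $|\sigma|$ of them, so your first-range bound does not cover them. The split should be taken where $|n+\sigma|=|t|$ (equivalently, at $n\asymp|s|$, as in \eqref{IFTE}); in the paper's applications $\sigma$ lies in a fixed strip, so this is harmless, but the lemma is stated for all $s\notin D$. Second, and more importantly, if you carry out your own computation in (ii) you get $-\log s-\gamma+O(|s|^{-1})=-\log|t|-i\tfrac{\pi}{2}{\rm sgn}(t)-\gamma+O(|t|^{-1})$, i.e.\ the \emph{opposite} sign on the imaginary part from what the statement (and the paper's \eqref{HP22}) displays. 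Since $\arg s\to\tfrac{\pi}{2}{\rm sgn}(t)$, the minus sign is forced; this is a sign slip in the paper that your approach exposes, and you should state the corrected main term explicitly rather than asserting that the three approximations ``collapse into the claimed'' formula. Also note that the asymptotic $\psi(z)=\log z+O(|z|^{-1})$ requires $z$ to stay in a sector $|\arg z|\leq\pi-\delta_0$ (or at bounded distance from the poles), which again holds in the intended application but should be said.
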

\begin{proof}
Using partial summation we have   
\begin{align}
\sum_{n=1}^\infty\frac{1}{(s+n)^l}=
l\int_1^\infty\frac{u-\{u\}}{(s+u)^{l+1}}du
\ll \int_1^\infty\left(\frac{1}{|s+u|^l}+\frac{|s|}{|s+u|^{l+1}}\right)du. \label{HP11}
\end{align}
The estimates
\begin{align}
|u+s|\geq \begin{cases}|{\rm Im}(u+s)|\geq |s\sin\varepsilon|, & u\leq|s| {\rm\:and\:}\arg s\in[-\pi+\varepsilon,\pi-\varepsilon], \\ |{\rm Re}(u+s)|\geq |s|\cos\varepsilon, &  u\leq|s| {\rm\:and\:}\arg s\in[-\varepsilon,\varepsilon],\\ |{\rm Re}(u+s)|\geq u, &  u\geq|s|, \end{cases} \label{IFTE}
\end{align}
give that the right-hand side of \eqref{HP11} is
\begin{align}
&\ll\int_1^{|s|}\left(\frac{1}{|s\sin\varepsilon|^l}+\frac{|s|}{|s\sin\varepsilon|^{l+1}}\right)du+\int_{|s|}^\infty\left(\frac{1}{u^l}+\frac{|s|}{u^{l+1}}\right)du
\ll\frac{1}{|s|^{l-1}} \label{HP12}
\end{align}
for any $s\in\mathbb{C}\setminus D$ and $l\in\mathbb{Z}_{\geq2}$. 
Combining \eqref{HP11} and \eqref{HP12}, and estimating $|s|^{-(l-1)}$ we obtain the formula (i). Similally to (i), we calculate  
\begin{align}
&\sum_{n=1}^\infty \left(\frac{1}{s+n}-\frac{1}{n}\right)\notag\\
&=\int_1^\infty\left(\frac{1}{(s+u)^2}-\frac{1}{u^2}\right)(u-\{u\})du\notag\\
&=\int_1^\infty\left(\frac{1}{s+u}-\frac{1}{u}\right)du+\int_1^\infty\frac{(-s)}{(s+u)^2}du-\int_1^\infty\frac{\{u\}}{(s+u)^2}du+1-\gamma. \label{HP21}
\end{align}
Then for $s\in\mathbb{C}\setminus D$ the first and second term of right-hand side of \eqref{HP21} are 
\begin{align}
&=-\log(s+1)\notag\\
&=-\log s+O(|s|^{-1})=\begin{cases} -\log|t|+i\pi{\rm sgn}(t)/2+O(|t|^{-1}), & |t|\gg1, \\ O(1), & |t|\ll1 \end{cases} \label{HP22}
\end{align}
and
\begin{align}
=-1+\frac{1}{1+s}=\begin{cases} -1+O(|t|^{-1}), & |t|\gg1, \\ O(1), & |t|\ll1. \end{cases} \label{HP23}
\end{align}
respectively, where \eqref{IFTE} and $\log s=\log|t|+i\pi{\rm sgn}(t)/2+O(|t|^{-1})$ (see \cite[p.335]{GD1}) were used. 
By combining \eqref{HP21}--\eqref{HP23} the formula (ii) is obtained.
\end{proof}

Using the infinite product of $\Gamma(s)$ and applying the above lemma with $F=\chi_F$, we obtain the approximate formula for $(\chi_F^{(r)}/\chi_F)(s)$ as follows: 
\begin{lem}\label{CHIDF}
For any $F\in\mathcal{S}$ and $r\in\mathbb{Z}_{\geq0}$ the function $(\chi_F^{(r)}/\chi_F)(s)$ has pole of order $r$ at $s=-(\mu_j+n)/\lambda_j,\:1+(\overline{\mu_j}+n)/\lambda_j$ where $n\in\mathbb{Z}_{\geq0}$ and $j\in\left\{1,\dots,q\right\}$. Put
\begin{align*}
D&=\{z\in\mathbb{C}\mid a\leq{\rm Re\;}s\leq b\}, \\
E_1&=\left\{z\in\mathbb{C}\:\left|\:{\rm Re\;}z<\max_{j}\tfrac{-{\rm Re\;}\mu_j+\delta}{\lambda_j},\:\:\min_{j}\tfrac{-{\rm Im\;}\mu_j-1}{\lambda_j}<{\rm Im\;}z<\max_{j}\tfrac{-{\rm Im\;}\mu_j+1}{\lambda_j}\right.\right\},\\
E_2&=\left\{z\in\mathbb{C}\:\left|\:{\rm Re\;}z>\max_{j}\tfrac{{\rm Re\;}\mu_j-\delta}{\lambda_j}-1,\:\:\min_{j}\tfrac{{\rm Im\;}\mu_j-1}{\lambda_j}+1<{\rm Im\;}z<\max_{j}\tfrac{{\rm Im\;}\mu_j+1}{\lambda_j}+1\right.\right\} 
\end{align*}
where $a,b\in\mathbb{R}$ and $\delta\in\mathbb{R}_{>0}$. Then for any $s\in\mathbb{C}\setminus(E_1\cup E_2)$ we have 
\begin{align*}
\frac{\chi_F^{(r)}}{\chi_F}(s)=\begin{cases} \left(-\log(C_F|t|^{d_F})\right)^m+O\left(\dfrac{(\log|t|)^{m-1}}{|t|}\right), & |t|\gg1, \\ O(1), & |t|\ll1 \end{cases}
\end{align*}
where $C_F=(Q\lambda_1^{\lambda_1}\cdots\lambda_q^{\lambda_q})^{2}$. 
\end{lem}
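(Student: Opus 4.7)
The plan is to reduce everything to logarithmic derivatives of $\chi_F$ and then apply Lemma \ref{LIB}. Setting $G(s):=\log\chi_F(s)$, from \eqref{FCHI} one has
\[
G(s)=\log\omega+(1-2s)\log Q+\sum_{j=1}^q\bigl[\log\Gamma(\lambda_j(1-s)+\overline{\mu_j})-\log\Gamma(\lambda_j s+\mu_j)\bigr],
\]
so $G^{(1)}(s)=-2\log Q-\sum_{j=1}^q\lambda_j\bigl[\psi(\lambda_j(1-s)+\overline{\mu_j})+\psi(\lambda_j s+\mu_j)\bigr]$ and, for $k\ge 2$, $G^{(k)}(s)$ is a $\lambda_j^k$-weighted linear combination of $\psi^{(k-1)}$-values at the same two arguments. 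The pole-order assertion is then immediate from Lemma \ref{LIB}: at a zero or pole $s_0$ of $\chi_F$, $G^{(1)}$ has a simple pole and $G^{(k)}$ has a pole of order $k$, so every monomial $\prod_k (G^{(k)})^{\ell_k}$ appearing in the expansion of $\chi_F^{(r)}/\chi_F$ has pole of order $1\ell_1+2\ell_2+\cdots+r\ell_r=r$.

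For the main asymptotic I would substitute the series
\[
\psi(z+1)=-\gamma-\sum_{n=1}^\infty\Bigl(\frac{1}{n+z}-\frac{1}{n}\Bigr),\qquad \psi^{(k-1)}(z)=(-1)^k(k-1)!\sum_{n=0}^\infty\frac{1}{(n+z)^k}\quad (k\ge 2),
\]
into these expressions and invoke Lemma \ref{HAFF}. The exceptional sets $E_1,E_2$ are precisely designed so that if $s\notin E_1\cup E_2$, every shifted argument $\lambda_j s+\mu_j$ and $\lambda_j(1-s)+\overline{\mu_j}$ lies outside the strip where Lemma \ref{HAFF} fails. Applying Lemma \ref{HAFF}(ii) to each digamma value, the $\pm i\pi\,\mathrm{sgn}(t)/2$ contributions from the two arguments carry opposite signs (since the shifted arguments have imaginary parts of opposite sign) and cancel, leaving $\psi(\lambda_j s+\mu_j)+\psi(\lambda_j(1-s)+\overline{\mu_j})=2\log(\lambda_j|t|)+O(|t|^{-1})$. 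Collecting the $\lambda_j$-weighted contributions and using $2\sum_j\lambda_j=d_F$ then yields $G^{(1)}(s)=-\log(C_F|t|^{d_F})+O(|t|^{-1})$. For $k\ge 2$, Lemma \ref{HAFF}(i) applied termwise gives $G^{(k)}(s)=O(|t|^{-(k-1)})$.

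Substituting into Lemma \ref{LIB}, the distinguished term $\ell_1=r$ contributes $(G^{(1)}(s))^r=(-\log(C_F|t|^{d_F}))^r+O((\log|t|)^{r-1}/|t|)$ by the binomial theorem. Every other monomial contains some factor $G^{(k)}$ with $k\ge 2$; the constraint $\sum_k k\ell_k=r$ then forces $\ell_1\le r-2$ and $\sum_{k\ge 2}(k-1)\ell_k\ge 1$, so such a monomial is $O((\log|t|)^{r-2}/|t|)$ and is absorbed into the main error term. For the $|t|\ll 1$ case, the $O(1)$ branches of Lemma \ref{HAFF} immediately yield $G^{(k)}(s)=O(1)$ and hence $\chi_F^{(r)}/\chi_F(s)=O(1)$, as required.

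The only delicate steps are verifying that $s\in\mathbb{C}\setminus(E_1\cup E_2)$ translates exactly into the strip-avoidance hypothesis of Lemma \ref{HAFF} for all $j\in\{1,\dots,q\}$ simultaneously, and tracking the precise cancellation of the $\pm i\pi\,\mathrm{sgn}(t)/2$ contributions so that the leading asymptotic $-\log(C_F|t|^{d_F})$ comes out purely real---this is exactly where the pairing of $\mu_j$ with $\overline{\mu_j}$ in \eqref{FCHI} plays its role.
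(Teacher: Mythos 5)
Your proposal is correct and follows essentially the same route as the paper: express $G^{(1)}=(\log\chi_F)'$ via digamma functions, apply Lemma \ref{HAFF} (with the sign cancellation of the $\pm i\pi\,\mathrm{sgn}(t)/2$ terms coming from the pairing of $\lambda_js+\mu_j$ with $\lambda_j(1-s)+\overline{\mu_j}$) to get $G^{(1)}(s)=-\log(C_F|t|^{d_F})+O(|t|^{-1})$ and $G^{(k)}(s)=O(|t|^{-(k-1)})$ for $k\geq2$, and then recombine through Lemma \ref{LIB}, with the pole-order claim following from the order-$k$ poles of $G^{(k)}$ and the constraint $\sum_k k\ell_k=r$. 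No gaps.
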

\begin{proof}
First we check the location and order of pole for $(\chi_F^{(m)}/\chi_F)(s)$. Taking logarithmic differentiation in the both-hand side of $(1/\Gamma)(s)=se^{\gamma s}\prod_{n=1}^\infty(1+s/n)e^{-s/n}$ and \eqref{FCHI} we have  
\begin{align}
-\frac{\Gamma'}{\Gamma}(s)&=\frac{1}{s}+\gamma+\sum_{n=1}^\infty\left(\frac{1}{s+n}-\frac{1}{n}\right),  \label{DFCH2}\\
\frac{\chi_F'}{\chi_F}(s)&=-2\log Q-\sum_{j=1}^q\lambda_j\left(\frac{\Gamma'}{\Gamma}(\lambda_js+\mu_j)+\frac{\Gamma'}{\Gamma}(\lambda_j(1-s)+\overline{\mu_j})\right) \label{DFCH1}
\end{align}
respectively. Put $G^{(l)}(s)=(d^{l-1}/ds^{l-1})G^{(1)}(s)$ and let $G^{(1)}(s)$ be the right-hand side of \eqref{DFCH1}. Applying \eqref{DFCH2} to \eqref{DFCH1}, we get 
\begin{align}
G^{(l)}(s)=\begin{cases}
\displaystyle-2\log Q+d_F\gamma+\sum_{j=1}^q\lambda_j\Biggl(\frac{1}{\lambda_js+\mu_j}+\frac{1}{\lambda_j(1-s)+\overline{\mu_j}}+ & \\
\displaystyle+\sum_{n=1}^\infty\left(\frac{1}{\lambda_js+\mu_j+n}-\frac{1}{n}+\frac{1}{\lambda_j(1-s)+\overline{\mu_j}+n}-\frac{1}{n}\right)\Biggr), & l=1, \\
\displaystyle \sum_{j=1}^q\lambda_j^l\sum_{n=0}^\infty\left(\frac{(-1)^{l-1}(l-1)!}{(\lambda_js+\mu_j+n)^l}+\frac{(l-1)!}{(\lambda_j(1-s)+\overline{\mu_j}+n)^l}\right), & l\in\mathbb{Z}_{\geq2}.
\end{cases}\label{GLSAF}
\end{align}
Hence $G^{(l)}(s)$ has pole of order $l$ at $s=-(\mu_j+n)/\lambda_j,\;1+(\overline{\mu_j}+n)/\lambda_j\;(n\in\mathbb{Z}_{\geq0})$. By using Lemma \ref{LIB} the first statement of Lemma \ref{CHIDF} is showed. 

Lastly we shall approximate $G^{(l)}(s)$. Since ${\rm Re}(\lambda_js+\mu_j)<\delta$, $|{\rm Im}(\lambda_js+\mu_j)|<1$ for $s\in E_1$ and ${\rm Re}(\lambda_j(1-s)+\mu_j)<\delta$, $|{\rm Im}(\lambda_j(1-s)+\overline{\mu_j})|<1$ for $s\in E_2$, Lemma \ref{HAFF} gives that
\begin{align*}
G^{(j)}(s)=\begin{cases} 
\displaystyle O(|t|^{-(j-1)}), & j\in\mathbb{Z}_{\geq2}, |t|\gg1, \\ O(1), & j\in\mathbb{Z}_{\geq1}, |t|\ll1. \end{cases}
\end{align*}
for $s\in\mathbb{C}\setminus(E_1\cup E_2)$. Especially in the case of $j=1$ and $|t|\gg1$, since ${\rm sgn}({\rm Im}(\lambda_js+\mu_j))+{\rm sgn}({\rm Im}(\lambda_j(1-s)+\overline{\mu_j}))=0$, $G^{(1)}(s)$ is approximated as
\begin{align*}
G^{(1)}(s)&=-2\log Q+d_F\gamma+\sum_{j=1}^q\lambda_j(-2\log|\lambda_jt+{\rm Im\;}\mu_j|-2\gamma)+O(|t|^{-1})\\
&=-\log\bigl((Q\lambda_1^{\lambda_1}\cdots \lambda_q^{\lambda_q})^2|t|^{d_F}\bigr)+O(|t|^{-1})
\end{align*}
for $s\in\mathbb{C}\setminus(E_1\cup E_2)$ and $|t|\gg1$. By Lemma \ref{LIB} a desired approximation is obtained:
\begin{align*}
\frac{\chi_F^{(m)}}{\chi_F}(s)=&(G^{(1)}(s))^m+O(|G^{(1)}(s)|^{m-2}|G^{(2)}(s)|)\\
=&\begin{cases} (-d_F\log(C_F|t|))^m+O(|t|^{-1}{(\log t)^{m-1}}), & |t|\gg1, \\  O(1), & |t|\ll1. \end{cases}
\end{align*}
\end{proof}

Next in order to estimate the gamma-factors of \eqref{DEFGM} and \eqref{DEFDM}, we shall use the following estimates:
\begin{lem}[{\cite[Lemma 2 of p.334]{GD1}}]
For $a,b\in\mathbb{R}$ put $D:=\{z\in\mathbb{C}\mid a\leq{\rm Re\:}z\leq b\}$ and $E_{-}:=\{z\in\mathbb{C}\mid {\rm Re\:}z<1/2, |{\rm Im\:}z|<1\}$. Then for any fixed $s\in D$ and $C_0\in\mathbb{R}_{>0}$ we have 
\begin{align*}
\left|\frac{\Gamma(s+w)}{\Gamma(s)}
(e^{-i\frac{\pi}{2}{\rm sgn}(t)})^{w}\right|
\leq\begin{cases}\displaystyle C_1\frac{(1+|t+v|)^{\sigma+u-1/2}}{|t|^{\sigma-1/2}}, & \text{if\;} s+w\in D\setminus E_{-}, \\ C_2|t|^{u}, & \text{if\;} |w|\leq C_0\sqrt{|t|}. \end{cases}
\end{align*}
where $C_1$ and $C_2$ are constants. 
\end{lem}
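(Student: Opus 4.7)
The plan is to derive both bounds from Stirling's expansion
$\log\Gamma(z)=(z-\tfrac12)\log z-z+\tfrac12\log(2\pi)+O(1/|z|)$, which holds uniformly as long as $z$ stays bounded away from the non-positive real axis. Writing $w=u+iv$, taking the logarithm of the modulus in question reduces the task to controlling
$\mathrm{Re}\bigl[(s+w-\tfrac12)\log(s+w)-(s-\tfrac12)\log s - w -\tfrac{i\pi w}{2}\mathrm{sgn}(t)\bigr]+O(1).$
The key algebraic input is the asymptotic $\log s=\log|t|+\tfrac{i\pi}{2}\mathrm{sgn}(t)+O(1/|t|)$, valid for $s\in D$ with $|t|\gg 1$, which makes the imaginary part of $\log s$ cancel exactly against the phase factor $(e^{-i\pi\mathrm{sgn}(t)/2})^{w}$ that has been introduced by hand.

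For the second bound, where $|w|\le C_0\sqrt{|t|}$, I would Taylor-expand
$\log(s+w)=\log s+\tfrac{w}{s}-\tfrac{w^2}{2s^2}+\cdots,$
which converges since $|w/s|=O(|t|^{-1/2})$. Substituting into the expression above and applying the cancellation of phases described in the previous paragraph, the leading contribution is $u\log|t|$, while every subsequent term in the expansion is bounded (the term $w^{k}/s^{k-1}$ is $O(|t|^{(2-k)/2})$). Exponentiating yields $C_2|t|^{u}$.

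For the first bound, $w$ need not be small compared with $s$, so I would instead apply Stirling separately to the numerator and the denominator, using the classical modulus estimate $|\Gamma(\sigma'+it')|\asymp(1+|t'|)^{\sigma'-1/2}e^{-\pi|t'|/2}$. The polynomial factors combine to give $(1+|t+v|)^{\sigma+u-1/2}/|t|^{\sigma-1/2}$ as claimed. The main obstacle is the exponential factors: the ratio contributes $e^{-\pi(|t+v|-|t|)/2}$, while $|(e^{-i\pi\mathrm{sgn}(t)/2})^{w}|=e^{\pi v\,\mathrm{sgn}(t)/2}$. Under the implicit assumption that $|v|<|t|$, one has $\mathrm{sgn}(t+v)=\mathrm{sgn}(t)$, hence $|t+v|-|t|=v\,\mathrm{sgn}(t)$, and the two exponentials annihilate one another exactly, leaving only the desired polynomial factors.

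Finally I would verify that the hypothesis $s+w\in D\setminus E_{-}$ secures at least unit distance of $s+w$ from every pole of $\Gamma$, so that Stirling's remainder $O(1/|s+w|)$ is harmless; the remainder $O(1/|s|)$ is likewise absorbed into $O(1)$ because $|s|\gg 1$ throughout. The constants $C_1,C_2$ then emerge from these absorbed remainders together with the factor $\tfrac12\log(2\pi)$ in Stirling.
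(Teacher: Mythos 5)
The paper does not prove this lemma at all: it is imported verbatim from Good \cite{GD1} (Lemma 2, p.~334), so there is no in-paper argument to compare against. Your Stirling-based derivation is the natural route and is essentially sound; in particular the treatment of the second bound (Taylor expansion of $\log(s+w)$ around $\log s$, cancellation of $-\tfrac{\pi}{2}v\,\mathrm{sgn}(t)$ from $\mathrm{Re}(w\log s)$ against the inserted phase, and the observation that $w^k/s^{k-1}=O(|t|^{(2-k)/2})$ for $|w|\le C_0\sqrt{|t|}$) is correct, and your closing remarks about $s+w$ staying away from the poles of $\Gamma$ are the right things to check.

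The one genuine flaw is in the first bound, where you invoke ``the implicit assumption that $|v|<|t|$'' to get exact cancellation of the exponentials. That assumption is not part of the hypothesis $s+w\in D\setminus E_{-}$, and it fails precisely where the lemma is used in this paper: in the estimate of $I_1'$ and $I_2'$ the contours $L_{\pm 1},L_{\pm 2}$ have $v$ ranging over all of $[\sqrt{|t|},\infty)$, so $|v|\ge|t|$ occurs and $\mathrm{sgn}(t+v)$ need not equal $\mathrm{sgn}(t)$. The statement survives because only an upper bound is needed: for every real $t,v$ one has $|t+v|\ge |t|+v\,\mathrm{sgn}(t)$ (check the two cases $\mathrm{sgn}(t+v)=\pm\mathrm{sgn}(t)$), hence
\begin{align*}
\exp\Bigl(-\tfrac{\pi}{2}\bigl(|t+v|-|t|\bigr)+\tfrac{\pi}{2}v\,\mathrm{sgn}(t)\Bigr)\le 1 ,
\end{align*}
with equality exactly in your case $\mathrm{sgn}(t+v)=\mathrm{sgn}(t)$. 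Replacing the claimed exact annihilation by this inequality repairs the argument without further changes; as written, though, the first case is only established on a proper subset of its stated (and needed) range.
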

Replace $s\mapsto\lambda_js+\mu_j$ and $w\mapsto\lambda_jw$ in the above lemma. Using the trivial estimate $(1+|\lambda_j(t+v)+{\rm Im\:}\mu_j|)^{\lambda_j(\sigma+u)+{\rm Re\:}\mu_j-1/2}\asymp \lambda_j^{\lambda_ju}(1+|t+v|)^{\lambda_j(\sigma+u)+{\rm Re\:}\mu_j-1/2}$ for $s\in D$ and $s+w\in D\setminus E_{1}$, and multiplying the above formula for $j\in\{1,\dots,q\}$, a desired estimate is obtained:
\begin{lem}\label{GFEST}
Let $D$ and $E_1$ be those of Lemma \ref{CHIDF} respectively. Then for any fixed $s\in D$ and $c_0\in\mathbb{R}_{>0}$ we have %
\begin{align*}
&\left|\prod_{j=1}^q\frac{\Gamma(\lambda_j(s+w)+\mu_j)}{\Gamma(\lambda_js+\mu_j)}
(e^{-i\frac{\pi}{2}{\rm sgn}(t)})^{\frac{d_F}{2}w}\right|\\
&\leq\begin{cases}\displaystyle c_1(\lambda_1^{\lambda_1}\cdots\lambda_q^{\lambda_q})^{u}\frac{(1+|t+v|)^{\frac{d_F(\sigma+u)+e_F-q}{2}}}{|t|^{\frac{d_F\sigma+e_F-q}{2}}}, & \text{if\;}s+w\in D\setminus E_1,\\ 
c_2(\lambda_1^{\lambda_1}\cdots\lambda_r^{\lambda_r}|t|^{\frac{d_F}{2}})^u, & \text{if\;}  |w|\leq c_0\sqrt{|t|}, \end{cases}
\end{align*}
where $e_F:=2\sum_{j=1}^q{\rm Re\;}\mu_j$ and $c_1$, $c_2$ are constants depending on $\lambda_1, \mu_1, \dots, \lambda_q, \mu_q$. 
\end{lem}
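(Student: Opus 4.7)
The plan is to apply the single-factor lemma of Good (stated immediately above Lemma \ref{GFEST}) to each of the $q$ ratios $\Gamma(\lambda_j(s+w)+\mu_j)/\Gamma(\lambda_j s+\mu_j)$ separately, after the change of variables $s\mapsto \lambda_j s+\mu_j$ and $w\mapsto \lambda_j w$, and then multiply the resulting bounds over $j=1,\dots,q$. The exponential phase factor combines correctly because $\prod_{j=1}^q(e^{-i\frac{\pi}{2}{\rm sgn}(t)})^{\lambda_j w}=(e^{-i\frac{\pi}{2}{\rm sgn}(t)})^{(\sum_j\lambda_j)w}=(e^{-i\frac{\pi}{2}{\rm sgn}(t)})^{d_Fw/2}$, which is exactly the phase on the left-hand side of the target estimate.

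Before invoking Good's lemma I would verify the domain hypotheses. If $s\in D$ then ${\rm Re}(\lambda_j s+\mu_j)\in[\lambda_j a+{\rm Re}\,\mu_j,\lambda_j b+{\rm Re}\,\mu_j]$, so the transformed point sits in a strip of the kind required. The more delicate check is that $s+w\in D\setminus E_1$ forces $\lambda_j(s+w)+\mu_j\notin E_{-}$ for every $j$: the set $E_1$ in Lemma \ref{CHIDF} was defined precisely so that if ${\rm Re}(s+w)\geq\max_j(-{\rm Re}\,\mu_j+\delta)/\lambda_j$ then ${\rm Re}(\lambda_j(s+w)+\mu_j)\geq\delta$ for all $j$, while if ${\rm Im}(s+w)$ lies outside the $E_1$-band then $|{\rm Im}(\lambda_j(s+w)+\mu_j)|\geq 1$ for all $j$. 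Taking $\delta\geq 1/2$ then guarantees the transformed point avoids $E_{-}=\{z:{\rm Re}\,z<1/2,\,|{\rm Im}\,z|<1\}$.

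In the first case Good's lemma, applied factorwise, bounds each ratio by a quantity of the form $(1+|\lambda_j(t+v)+{\rm Im}\,\mu_j|)^{\lambda_j(\sigma+u)+{\rm Re}\,\mu_j-1/2}/|\lambda_j t+{\rm Im}\,\mu_j|^{\lambda_j\sigma+{\rm Re}\,\mu_j-1/2}$ up to a constant. The comparisons $1+|\lambda_j(t+v)+{\rm Im}\,\mu_j|\asymp\lambda_j(1+|t+v|)$ and $|\lambda_j t+{\rm Im}\,\mu_j|\asymp\lambda_j|t|$ (valid for $|t|\gg1$, with implied constants depending only on $\lambda_j,\mu_j$) extract the factor $\lambda_j^{\lambda_j u}$ as advertised, while the rest of the $\lambda_j$-dependence is absorbed into the constant $c_1$. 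Multiplying over $j$ and using $\sum_j\lambda_j=d_F/2$, $\sum_j{\rm Re}\,\mu_j=e_F/2$ collects the exponents into $(d_F(\sigma+u)+e_F-q)/2$ and $(d_F\sigma+e_F-q)/2$, giving the stated bound.

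For the second case I would choose $c_0:=C_0/\max_j\lambda_j$ so that the constraint $|w|\leq c_0\sqrt{|t|}$ implies $|\lambda_j w|\leq C_0\sqrt{|t|}$ for every $j$; the second alternative in Good's lemma then yields a factor of order $|\lambda_j t+{\rm Im}\,\mu_j|^{\lambda_j u}\asymp\lambda_j^{\lambda_j u}|t|^{\lambda_j u}$, and taking the product produces $(\lambda_1^{\lambda_1}\cdots\lambda_q^{\lambda_q})^u|t|^{d_Fu/2}$ as required. The bulk of the argument is routine bookkeeping of exponents and constants; the only place that needs genuine care is the domain translation from $D\setminus E_1$ to $D\setminus E_{-}$ outlined above, since a mismatch there would leave at least one factor outside the hypotheses of Good's lemma and break the estimate.
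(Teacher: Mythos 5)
Your proposal is correct and follows essentially the same route as the paper: the paper likewise obtains the lemma by substituting $s\mapsto\lambda_js+\mu_j$, $w\mapsto\lambda_jw$ in Good's single-factor lemma, extracting $\lambda_j^{\lambda_ju}$ via the comparison $1+|\lambda_j(t+v)+{\rm Im\:}\mu_j|\asymp\lambda_j(1+|t+v|)$, and multiplying over $j=1,\dots,q$. If anything, your verification of the domain translation from $D\setminus E_1$ to the hypotheses of Good's lemma is more explicit than the paper's.
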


By using Stirling's formula and residue theorem the functions $\gamma_{j}^{(r)}(s;\rho)$ and \\ $\delta_{j}^{(r)}(s;\rho)$ are approximate as follows: 
\begin{lem}\label{RCDE}
For any $j, r\in\mathbb{Z}_{\geq0}$ and $s\in\mathbb{C}: |t|\gg1$ we have 
\begin{align}
{\gamma_{j}^{(r)}}\left(s;\frac{1}{(\lambda_1^{\lambda_1}\cdots\lambda_q^{\lambda_q})^{\frac{2}{d_F}}|t|}\right)=&\begin{cases} \displaystyle (\chi_F^{(r)}/\chi_F)(1-s), & j=0, \\ 
\displaystyle O({|t|^{-1}}(\log|t|)^r), &  j=1, \\ \displaystyle O({|t|^{-\frac{j}{2}}}(\log|t|)^r), & j\in\mathbb{Z}_{\geq2}. \end{cases} \label{GJEE}
\end{align}
The function ${\delta}_{j}^{(r)}(s;(\lambda_1^{\lambda_1}\cdots\lambda_q^{\lambda_q})^{-\frac{2}{d_F}}|t|^{-1})$ equals the right hand side of \eqref{GJEE}.  
\end{lem}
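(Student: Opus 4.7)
The plan is to apply the residue theorem inside $\mathcal{F}$ to reduce $\gamma_j^{(r)}(s;\rho)$ to a finite sum of values of the smooth factor of the integrand, handling the small cases $j=0,1$ by explicit evaluation and the cases $j\geq 2$ by a direct contour bound. Let $h(w)$ denote the product of all factors of the integrand in \eqref{DEFGM} except $1/(w(w+1)\cdots(w+j))$. First I would verify that on and inside $\mathcal{F}$ the only singularities of the integrand are the simple poles at $w=0,-1,\dots,-j$: since $|w|\leq 2\sqrt{|t|}$ on $\mathcal{F}$ and $|{\rm Im\,}(s+w)|\asymp|t|$, the pole loci of $(\chi_F^{(r)}/\chi_F)(1-(s+w))$ listed in Lemma~\ref{CHIDF} and those of the Gamma factors lie at $|w|\asymp|t|$, hence well outside $\mathcal{F}$ for $|t|\gg1$; the polynomial ratio $g_F(s+w)/g_F(s)$ is holomorphic on a neighborhood of $\mathcal{F}$ and of size $1+O(|t|^{-1/2})$ by a Taylor expansion in $w$.

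For $j=0$ the residue theorem gives $\gamma_0^{(r)}(s;\rho)=h(0)$, and each factor of $h$ other than $(\chi_F^{(r)}/\chi_F)(1-s)$ equals $1$ at $w=0$, yielding the stated identity. For $j\geq 1$ the partial fraction decomposition $[w(w+1)\cdots(w+j)]^{-1}=\sum_{k=0}^j(-1)^k/(k!(j-k)!(w+k))$ combined with the residue theorem gives
\begin{equation*}
\gamma_j^{(r)}(s;\rho)=\sum_{k=0}^{j}\frac{(-1)^k}{k!(j-k)!}\,h(-k).
\end{equation*}
In the case $j=1$ I would evaluate $h(0)-h(-1)$ explicitly by combining three asymptotics. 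Lemma~\ref{CHIDF} supplies $(\chi_F^{(r)}/\chi_F)(2-s)=(\chi_F^{(r)}/\chi_F)(1-s)+O(|t|^{-1}(\log|t|)^{r-1})$, since both sides share the leading term $(-\log(C_F|t|^{d_F}))^r$. A Stirling expansion of each factor of the Gamma product, together with the specific choice $\rho=(\lambda_1^{\lambda_1}\cdots\lambda_q^{\lambda_q})^{-2/d_F}|t|^{-1}$, forces
\begin{equation*}
\prod_{j=1}^{q}\frac{\Gamma(\lambda_j(s-1)+\mu_j)}{\Gamma(\lambda_js+\mu_j)}\bigl(\rho e^{-\pi i\,{\rm sgn}(t)/2}\bigr)^{-d_F/2}=1+O(|t|^{-1}),
\end{equation*}
the key cancellation being $(|t|/s)^{d_F/2}e^{i\pi d_F{\rm sgn}(t)/4}=1+O(|t|^{-1})$ for $s=\sigma+it$, $|t|\gg1$. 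Finally $g_F(s-1)/g_F(s)=1+O(|t|^{-1})$ by the polynomial Taylor expansion. Assembling, $h(-1)=(\chi_F^{(r)}/\chi_F)(1-s)+O(|t|^{-1}(\log|t|)^r)$, whence $h(0)-h(-1)=O(|t|^{-1}(\log|t|)^r)$.

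For $j\geq 2$ I would estimate the contour integral directly. The length of $\mathcal{F}$ is $O(\sqrt{|t|})$; on $\mathcal{F}$ we have $|w+k|\asymp\sqrt{|t|}$ for every $0\leq k\leq j$, so $|1/(w(w+1)\cdots(w+j))|\ll|t|^{-(j+1)/2}$; Lemma~\ref{CHIDF} gives $(\chi_F^{(r)}/\chi_F)(1-(s+w))=O((\log|t|)^r)$; and the second case of Lemma~\ref{GFEST}, applicable since $|w|\leq 2\sqrt{|t|}$ on $\mathcal{F}$, combined with the definition of $\rho$, bounds the product of the Gamma factor and $\rho^{d_Fw/2}$ by $O(1)$. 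Multiplying these contributions yields $|\gamma_j^{(r)}(s;\rho)|\ll|t|^{-j/2}(\log|t|)^r$. The estimates for $\delta_j^{(r)}$ follow from the same argument with $\mu_j$ replaced by $\overline{\mu_j}$ throughout. The main obstacle will be the $j=1$ case: the direct contour bound yields only $O(|t|^{-1/2}(\log|t|)^r)$, so one is forced to rely on the delicate cancellation of the leading terms of $h(0)$ and $h(-1)$, which is arranged precisely by the normalization of $\rho$ and requires careful tracking of the Stirling asymptotics.
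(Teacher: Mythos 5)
Your proposal is correct and follows essentially the same route as the paper: residue theorem at $w=0$ (resp.\ $w=0,-1$) for $j=0$ (resp.\ $j=1$), with the $j=1$ case resolved by the cancellation $h(0)-h(-1)=O(|t|^{-1}(\log|t|)^r)$ obtained from Stirling's formula, $g_F(s-1)/g_F(s)=1+O(|t|^{-1})$, and Lemma \ref{CHIDF}, and a direct length-times-sup bound on $\mathcal{F}$ via Lemma \ref{GFEST} for $j\geq2$. The only cosmetic difference is that you record the general partial-fraction residue formula for all $j\geq1$ before specializing to $j=1$, whereas the paper writes out the two residues directly.
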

\begin{proof}
Since $|w|\ll\sqrt{|t|}$ for $w\in\mathcal{F}$, from Lemma \ref{GFEST} we can obtain a desired formula in the case of $j\in\mathbb{Z}_{\geq2}$:
\begin{align*}
\gamma_{j}^{(r)}\left(s;\frac{1}{(\lambda_1^{\lambda_1}\cdots\lambda_q^{\lambda_q})^{\frac{2}{d_F}}|t|}\right)\ll\int_{\mathcal{F}}\frac{(\log|t|)^{r}}{|t|^{\frac{j+1}{2}}}|dw|\ll\frac{(\log|t|)^{r}}{|t|^{\frac{j}{2}}}. 
\end{align*} 
Using Cauchy's residue theorem we have $\gamma_0^{(r)}(s,\rho)=(\chi_F^{(r)}/\chi_F)(1-s)$ and 
\begin{align}
&\gamma_{1}^{(r)}\left(s;\frac{1}{(\lambda_1^{\lambda_1}\cdots\lambda_q^{\lambda_q})^{\frac{2}{d_F}}|t|}\right)\notag\\
&={\frac{\chi_F^{(r)}}{\chi_F}}(1-s)-\frac{{g_F}(s-1)}{{g_F}(s)}\frac{\chi_F^{(r)}}{\chi_F}(2-s)\prod_{j=1}^q\frac{\Gamma(\lambda_j(s-1)+{\mu_j})}{\Gamma(\lambda_js+{\mu_j})}(\lambda_jit)^{\lambda_j} \label{G1A1}
\end{align}
where we used $|t|e^{\frac{\pi}{2}i{\rm sgn}(t)}=it$. It is clear that  
\begin{align}
\frac{{g_F}(s-1)}{{g_F}(s)}=\frac{1+O(|s|^{-1})}{1+O(|s|^{-1})}=1+O\left(\frac{1}{|t|}\right). \label{G1A2} 
\end{align}
Stirling's formula $\Gamma(s)=\sqrt{2\pi}s^{s-1/2}e^{-s}(1+O(|s|^{-1}))$ and the trivial approximation $\lambda_js+\mu_j=i\lambda_jt(1+O(|t|^{-1}))$ give that
\begin{align}
\frac{\Gamma(\lambda_j(s-1)+\mu_j)}{\Gamma(\lambda_js+{\mu_j})}
&=\frac{(i\lambda_jt)^{\lambda_j(s-1)+{\mu_j}-1/2}e^{-i\lambda_jt}(1+O(|t|^{-1}))}{(i\lambda_jt)^{\lambda_js+{\mu_j}-1/2}e^{-i\lambda_jt}(1+O(|t|^{-1}))}\notag\\
&=(\lambda_jit)^{-\lambda_j}(1+O(|t|^{-1})), \label{G1A3}
\end{align}
Combining \eqref{G1A1}--\eqref{G1A3} and using Lemma \ref{CHIDF} we obtain 
\begin{align*}
\gamma_{1}^{(r)}\left(s;\frac{1}{(\lambda_1^{\lambda_1}\cdots\lambda_q^{\lambda_q})^{\frac{2}{d_F}}|t|}\right)
=&{\frac{\chi_F^{(r)}}{\chi_F}}(1-s)-\frac{\chi_F^{(r)}}{\chi_F}(2-s)+O\left(\frac{1}{|t|}\left|\frac{\chi_F^{(r)}}{\chi_F}(2-s)\right|\right)\notag\\
=&O(|t|^{-1}{(\log|t|)^{r}}).
\end{align*}
By the same discussion, the approximate formula of ${\delta}_{j}^{(r)}(s;1/((\lambda_1^{\lambda_1}\cdots\lambda_q^{\lambda_q})^{\frac{2}{d_F}}|t|))$ is also obtained.    
\end{proof}

Finally, in order to prove Theorem \ref{THM2} from Theorem \ref{THM1}, we introduce new functions. For $\varphi\in\mathcal{R}$, $\alpha\in\mathbb{R}_{\geq0}$ and $|t|\gg1$ we set 
\begin{align*}
\xi(\rho)&:=\begin{cases} 1, & \rho\in[0,1], \\ 0, & \rho\in[1,\infty).  \end{cases} \\
\varphi_\alpha(\rho)&:=\begin{cases} 1, & \rho\in[0,1-(2|t|^\alpha)^{-1}], \\ \varphi(1+(\rho-1)|t|^\alpha), & \rho\in[1-(2|t|^\alpha)^{-1},1+|t|^{-\alpha}], \\ 0, & \rho\in[1+|t|^{-\alpha},\infty), \end{cases}\\ 
\varphi_{0\alpha}(\rho)&:=1-\varphi_\alpha(1/\rho).
\end{align*}
Then these function have the following properties: 
\begin{lem}[{\cite[(12)--(15)]{GD1}}]\label{CFF}
For any $\alpha\in\mathbb{R}_{\geq0}$ and $\varphi\in\mathcal{R}$ we satisfy the following statements (i)--(iv):  
\begin{enumerate}[(i)]
\item $\varphi_\alpha,\varphi_{0\alpha}\in\mathcal{R}$.  
\item $(\varphi_\alpha-\xi)(\rho)=0, \ (\varphi_{0\alpha}-\xi)(\rho)=0$ for $\rho\in[0,1-(2|t|^\alpha)^{-1}]\cup[1+|t|^{-\alpha},\infty)$. 
\item  $ \varphi_\alpha^{(j)}(\rho)=0, \:\: \varphi_{0\alpha}^{(j)}(\rho)=0$ for $j\in\mathbb{Z}_{\geq1}$ and $\rho\in[0,1-(2|t|^\alpha)^{-1}]\cup[1+|t|^{-\alpha},\infty)$.
\item $\varphi_\alpha^{(j)}(\rho)\ll|t|^{\alpha j}, \ \varphi_{0\alpha}^{(j)}(\rho)\ll|t|^{\alpha j}, \ \|\varphi_\alpha^{(j)}\|_1\ll|t|^{\alpha(j-1)}, \ \|\varphi_{0\alpha}^{(j)}\|_1\ll|t|^{\alpha(j-1)}$ for $\rho\in[0,\infty)$ and $j\in\mathbb{Z}_{\geq0}$. 
\end{enumerate}
\end{lem}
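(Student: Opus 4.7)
The four items of Lemma \ref{CFF} are elementary consequences of the piecewise definitions of $\varphi_\alpha$, $\varphi_{0\alpha}$, and $\xi$, so my plan is to verify each directly rather than invoke any nontrivial machinery. Throughout I would assume $|t|^\alpha \geq 1$ (which is automatic for $|t|\gg 1$) so that $1/2 \leq 1 - (2|t|^\alpha)^{-1}$ and $1 + |t|^{-\alpha}\leq 2$.

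For (i), I would check the three defining conditions of $\mathcal{R}$ for $\varphi_\alpha$: smoothness, constancy $1$ on $[0,1/2]$, and constancy $0$ on $[2,\infty)$. The two flat conditions hold because the intervals $[0,1/2]$ and $[2,\infty)$ are contained in $[0,1-(2|t|^\alpha)^{-1}]$ and $[1+|t|^{-\alpha},\infty)$ respectively. For the $C^\infty$ gluing, the only candidates for failure are the junction points $\rho = 1 - (2|t|^\alpha)^{-1}$ and $\rho = 1 + |t|^{-\alpha}$; at the first, $\varphi(1/2)=1$ with all derivatives of $\varphi$ vanishing (since $\varphi\equiv 1$ on $[0,1/2]$), and at the second, $\varphi(2)=0$ with all derivatives vanishing (since $\varphi\equiv 0$ on $[2,\infty)$). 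Hence the pieces glue smoothly. For $\varphi_{0\alpha}(\rho)=1-\varphi_\alpha(1/\rho)$, smoothness away from $\rho=0$ is immediate from smoothness of $\varphi_\alpha$, and near $\rho=0$ we have $1/\rho\geq 1+|t|^{-\alpha}$ so $\varphi_\alpha(1/\rho)\equiv 0$, making $\varphi_{0\alpha}\equiv 1$ there.

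For (ii) and (iii), the verification is a matter of reading off the piecewise definitions: on $[0,1-(2|t|^\alpha)^{-1}]$ both $\varphi_\alpha$ and $\xi$ equal $1$, and on $[1+|t|^{-\alpha},\infty)$ both equal $0$; the corresponding flat intervals for $\varphi_{0\alpha}$ (coming from inverting through $\rho\mapsto 1/\rho$) contain the required sets, so the same equalities hold. Since both functions are constant on those two intervals, all derivatives of order $j\geq 1$ automatically vanish there, which is (iii).

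For (iv), I would differentiate $\varphi_\alpha(\rho)=\varphi(1+(\rho-1)|t|^\alpha)$ on the transition interval via the chain rule, yielding $\varphi_\alpha^{(j)}(\rho) = \varphi^{(j)}(1+(\rho-1)|t|^\alpha)\,|t|^{\alpha j}$, and zero elsewhere. Since $\varphi^{(j)}$ is uniformly bounded, this gives the pointwise estimate $\varphi_\alpha^{(j)}(\rho)\ll |t|^{\alpha j}$. The support of $\varphi_\alpha^{(j)}$ lies in a transition interval of length $3/(2|t|^\alpha)$, so integrating the pointwise bound produces $\|\varphi_\alpha^{(j)}\|_1 \ll |t|^{\alpha(j-1)}$. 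For $\varphi_{0\alpha}$, applying Faà di Bruno (or direct chain rule iterates) to $1-\varphi_\alpha(1/\rho)$ produces a sum of terms of the form $\varphi_\alpha^{(k)}(1/\rho)$ times polynomials in $1/\rho$; since the support in $\rho$ is confined to a small neighborhood of $\rho=1$, those polynomial factors are $O(1)$, and the same two bounds go through with constants only depending on $j$. The only nontrivial bookkeeping step is tracking the support of $\varphi_{0\alpha}^{(j)}$ precisely enough to see it also has length $O(|t|^{-\alpha})$, which is the main (mild) obstacle; everything else is routine calculus.
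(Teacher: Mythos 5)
The paper gives no proof of this lemma at all --- it is simply quoted from Good \cite[(12)--(15)]{GD1} --- so your direct verification from the piecewise definitions is the natural (indeed the only) route, and most of it is correct: item (i), the $\varphi_\alpha$ halves of (ii) and (iii), and the chain-rule/support-length computation in (iv) for $j\geq1$ are all complete as you describe them.

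One step fails as written, namely the claim that ``the corresponding flat intervals for $\varphi_{0\alpha}$ (coming from inverting through $\rho\mapsto1/\rho$) contain the required sets.'' Write $x=|t|^{-\alpha}$. The guaranteed flat-$1$ region of $\varphi_{0\alpha}(\rho)=1-\varphi_\alpha(1/\rho)$ is $\{\rho : 1/\rho\geq 1+x\}=[0,(1+x)^{-1}]$, and $(1+x)^{-1}<1-x/2$ whenever $0<x<1$, since $(1+x)(1-x/2)=1+\tfrac{x(1-x)}{2}>1$. Hence for $\rho\in((1+x)^{-1},\,1-\tfrac{x}{2}]$ the point $1/\rho$ lies inside the transition zone $[1-\tfrac{x}{2},\,1+x]$ of $\varphi_\alpha$, where $\varphi_\alpha(1/\rho)=\varphi(1+(1/\rho-1)|t|^\alpha)$ is the value of $\varphi$ at a point of roughly $[3/2,2]$; for a generic $\varphi\in\mathcal{R}$ this is not $0$, so neither $(\varphi_{0\alpha}-\xi)(\rho)$ nor $\varphi_{0\alpha}^{(j)}(\rho)$ need vanish there. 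So (ii) and (iii) for $\varphi_{0\alpha}$ are only valid on the smaller symmetric set $[0,(1+|t|^{-\alpha})^{-1}]\cup[1+|t|^{-\alpha},\infty)$ --- which is precisely the set the paper actually invokes in Section 4 when it deduces $S_0(\rho)=T_{m-r}(\rho)=0$, so the interval printed in the lemma is a slip, and your proof should state and prove the corrected version rather than assert a containment that is false. Two smaller remarks: the $L^1$ bound in (iv) can only hold for $j\geq1$ (for $j=0$ one has $\|\varphi_\alpha\|_1\asymp1$, not $O(|t|^{-\alpha})$), and your concern about the support of $\varphi_{0\alpha}^{(j)}$ resolves immediately because its transition zone is $[(1+x)^{-1},(1-x/2)^{-1}]$, an interval of length $\asymp x$.
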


\section{Proof of Theorem \ref{THM1}}\label{THM1P}
First for $s=\sigma+it: \sigma\in[0,1], |t|\gg1$, we shall use Cauchy's integral theorem in the region $$D_\sigma=\{w\in\mathbb{C}\mid w=u+iv,\;-1/2-\sigma\leq u\leq 3/2-\sigma,\;v\in\mathbb{R}\}.$$
Then from \eqref{FDFE} and Lemma \ref{LIB} the following lemma is obtained: 
\begin{pro}\label{PRO1}
For any $m\in\mathbb{Z}_{\geq0}$, $F\in\mathcal{S}$, $s=\sigma+it: \sigma\in[0,1], |t|\gg1$, $\varphi\in\mathcal{R}$ and $x\in\mathbb{R}_{>0}$ we have
\begin{align*}
F^{(m)}(s)=G_m(s;x,\varphi)+\chi_F(s)\sum_{r=0}^m(-1)^r\binom{m}{r}H_r(1-s;1/x,\varphi_0)
\end{align*}
where $G_r(s;x,\varphi)$ and $H_r(s;x,\varphi)$ are given by
\begin{align}
&\hspace{-1em}{G_r}(s;x,\varphi)\notag\\
=&\frac{1}{2\pi i}\int_{(\frac{3}{2}-\sigma)}\frac{{g_{F}}(s+w)}{{g_{F}}(s)}\frac{K_{\varphi}(w)}{w}{\frac{\chi_F^{(m-r)}}{\chi_F}}(1-(s+w)){F^{(r)}}(s+w)\times\notag\\
&\times\prod_{j=1}^q\frac{\Gamma(\lambda_j(s+w)+{\mu_j})}{\Gamma(\lambda_js+{\mu_j})}
(Q^{\frac{2}{d_F}}xe^{-i\frac{\pi}{2}{\rm sgn}(t)})^{\frac{d_F}{2}w}dw, \label{GSDEF}\\
&\hspace{-1em}H_r(s;x,\varphi)\notag\\
=&\frac{1}{2\pi i}\int_{(\frac{3}{2}-\sigma)}\frac{\overline{g_{F}}(s+w)}{\overline{g_{F}}(s)}\frac{K_{\varphi}(w)}{w}{\frac{\chi_F^{(m-r)}}{\chi_F}}(1-(s+w))\overline{F^{(r)}}(s+w)\times\notag\\
&\times\prod_{j=1}^q\frac{\Gamma(\lambda_j(s+w)+\overline{\mu_j})}{\Gamma(\lambda_js+\overline{\mu_j})}(Q^{\frac{2}{d_F}}xe^{-i\frac{\pi}{2}{\rm sgn}(t)})^{\frac{d_F}{2}w}dw \label{HSDEF}
\end{align}
respectively. Here $(3/2-\sigma)$ denotes $\left\{3/2-\sigma+iv\mid v\in\mathbb{R}\right\}$. 
\end{pro}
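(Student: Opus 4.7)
The plan is to derive the identity by starting from the definition of $G_m(s;x,\varphi)$, shifting its contour across the pole at $w=0$ to pick up $F^{(m)}(s)$ as a residue, and then recognizing the shifted integral, via \eqref{FDFE} and \eqref{FCHI}, as $-\chi_F(s)\sum_{r=0}^m(-1)^r\binom{m}{r}H_r(1-s;1/x,\varphi_0)$. Because $\chi_F^{(0)}/\chi_F\equiv 1$, the integrand of $G_m$ is $F^{(m)}(s+w)\,K_\varphi(w)/w$ multiplied by the prefactors $g_F(s+w)/g_F(s)$ and $\prod_j\Gamma(\lambda_j(s+w)+\mu_j)/\Gamma(\lambda_js+\mu_j)$, both of which reduce to $1$ at $w=0$; with $K_\varphi(0)=1$ by Lemma \ref{KPL}, the residue at $w=0$ is $F^{(m)}(s)$. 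By design, $g_F$ vanishes to order $p_F+m$ at $s+w=1$ (cancelling the pole of $F^{(m)}$) and to order $m+1$ at each $s+w=-(\mu_j+n)/\lambda_j$ with $0\le n\le\nu_j$ (cancelling the gamma poles lying inside the strip), so $w=0$ is the only pole of the integrand in $-1/2-\sigma\le{\rm Re\;}w\le 3/2-\sigma$. Cauchy's theorem on a tall rectangle in this strip, together with Lemma \ref{GFEST} to bound the gamma quotient on horizontal segments and the arbitrary-order decay of $K_\varphi$ afforded by \eqref{KPW2}, then yields
\[G_m(s;x,\varphi)=F^{(m)}(s)+J(s),\]
where $J(s)$ is the analogous integral taken on the line $(-1/2-\sigma)$.

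On this new line ${\rm Re\;}(s+w)=-1/2$, so \eqref{FDFE} gives the expansion $F^{(m)}(s+w)=\sum_{r=0}^m(-1)^r\binom{m}{r}\chi_F^{(m-r)}(s+w)\overline{F^{(r)}}(1-(s+w))$, decomposing $J(s)=\sum_r(-1)^r\binom{m}{r}J_r(s)$. Splitting $\chi_F^{(m-r)}(s+w)=(\chi_F^{(m-r)}/\chi_F)(s+w)\,\chi_F(s+w)$ in each summand and applying \eqref{FCHI} in the form
\[\frac{\chi_F(s+w)}{\chi_F(s)}\prod_{j=1}^q\frac{\Gamma(\lambda_j(s+w)+\mu_j)}{\Gamma(\lambda_js+\mu_j)}=Q^{-2w}\prod_{j=1}^q\frac{\Gamma(\lambda_j(1-s-w)+\overline{\mu_j})}{\Gamma(\lambda_j(1-s)+\overline{\mu_j})}\]
liberates a factor of $\chi_F(s)$ and recasts the gamma product into one adapted to the argument $1-s$.

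Finally the substitution $w'=-w$ carries $(-1/2-\sigma)$ to $(1/2+\sigma)=(3/2-(1-\sigma))$; by \eqref{KPW1}, $K_\varphi(-w')/(-w')=-K_{\varphi_0}(w')/w'$; the combined power of $Q$, $x$ and $e^{-i\pi{\rm sgn}(t)/2}$ collapses, using ${\rm sgn}({\rm Im}(1-s))=-{\rm sgn}(t)$, to $(Q^{2/d_F}(1/x)e^{-i\pi{\rm sgn}({\rm Im}(1-s))/2})^{(d_F/2)w'}$; and a direct inspection of \eqref{DEFG} (using $\overline{A_j(\overline{z})}=\overline{A_j}(z)$) proves the symmetry $\overline{g_F}(z)=g_F(1-z)$, which rewrites $g_F(s-w')/g_F(s)$ as $\overline{g_F}((1-s)+w')/\overline{g_F}(1-s)$. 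Matching the resulting integrand to the definition \eqref{HSDEF} then gives $J_r(s)=-\chi_F(s)H_r(1-s;1/x,\varphi_0)$, and combining with the displayed identity from the first paragraph produces the proposition.

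The main obstacle is the contour shift itself: one must confirm that the intricate cancellations between $g_F$, $F^{(m)}$ and the gamma factors leave no hidden singularities in the strip, and that Lemma \ref{GFEST} together with the arbitrary polynomial decay from \eqref{KPW2} is strong enough to annihilate the horizontal integrals uniformly as $|{\rm Im\;}w|\to\infty$. Once this is established, the remaining manipulations form a routine unpacking of \eqref{FDFE}, \eqref{FCHI}, \eqref{KPW1} and the definition \eqref{HSDEF}.
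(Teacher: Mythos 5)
Your proposal is correct and follows essentially the same route as the paper: Cauchy's theorem on the strip $-1/2-\sigma\leq{\rm Re\;}w\leq 3/2-\sigma$ with the residue $F^{(m)}(s)$ at $w=0$ (the zeros of $g_F$ cancelling the poles of $F^{(m)}$ and of the gamma factors), followed by \eqref{FDFE}, \eqref{FCHI}, the substitution $w\mapsto -w$, \eqref{KPW1} and $\overline{g_F}(1-s)=g_F(s)$ to identify the left-hand integral with $-\chi_F(s)\sum_r(-1)^r\binom{m}{r}H_r(1-s;1/x,\varphi_0)$. The one detail you defer --- vanishing of the horizontal segments --- is handled in the paper exactly as you anticipate, by combining Lemma \ref{GFEST} and the order-$l$ decay from \eqref{KPW2} with a Phragm\'en--Lindel\"of bound $F^{(r)}(s+w)\ll|v|^{\frac{d_F}{2}(\frac32-(\sigma+u))}(\log|v|)^r$ in the strip and choosing $l>M_F$.
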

\begin{proof}
For $r\in\{0,1,\dots,m\}$ and $|v|\gg|t|$ let
\begin{align}
I_r(v)=\:&\frac{1}{2\pi i}\int_{-1/2-\sigma+iv}^{3/2-\sigma+iv}\frac{g_{F}(s+w)}{g_{F}(s)}\frac{K_\varphi(w)}{w}\frac{\chi_F^{(m-r)}}{\chi_F}(s+w)F^{(r)}(s+w)\times\notag\\
&\times\prod_{j=1}^q\frac{\Gamma(\lambda_j(s+w)+\mu_j)}{\Gamma(\lambda_js+\mu_j)}
(Q^{\frac{2}{d_F}}xe^{-i\frac{\pi}{2}{\rm sgn}(t)})^{\frac{d_F}{2}w}dw. \label{DFIV}
\end{align}

First we shall show that the integrand of \eqref{DFIV} is holomorphic in $D_\sigma\setminus\{0\}$. From Lemma \ref{KPL}, $K_\varphi(w)(Q^{{2}/{d_F}}xe^{-i\pi{\rm sgn}(t)/2})^{{d_Fw}/{2}}$ is holomorphic in $D_\sigma$. Since $F^{(m)}(w+s)$ has pole of order $p_F+m$ at most at $w=1-s$, we see that 
\begin{align}
(s+w-1)^{p_{F}+m}F^{(m)}(s+w) \label{GARR}
\end{align}
is holomorphic in $w\in D_\sigma$. 

Here we shall consider the holomorphicity of gamma-factor of \eqref{DFIV}. In the case of ${\rm Re\:}\mu_j>\lambda_j/2$, since ${\rm Re}(\lambda_j(s+w)+\mu_j)\geq -\lambda_j/2+{\rm Re\:}\mu_j>0$ for $w\in D_\sigma$ we see that $A_j(s+w)\Gamma(\lambda_j(s+w)+\mu_j)=\Gamma(\lambda_j(s+w)+\mu_j)$ is holomorphic in $D_\sigma$. On the other hand, in the case of ${\rm Re\;}\mu_j\leq\lambda_j/2$, we have ${\rm Re}(\lambda_j(s+w)+\mu_j+[\lambda_j/2-{\rm Re\;}\mu_j]+1)=1-\{\lambda_j/2-{\rm Re\;}\mu_j\}>0$ for $w\in D_\sigma$. Hence the functional equation for $\Gamma(s)$ implies that
\begin{align}
A_j(s)\Gamma(\lambda_j(s+w)+\mu_j)=\Gamma(\lambda_j(s+w)+\mu_j+[\lambda_j/2-{\rm Re\:}\mu_j]+1) \label{GBRR}
\end{align} 
is holomorphic in $D_\sigma$. 

Next we consider the existence of pole for $(\chi_F^{(m-r)}/\chi_F)(s+w)$ in $D_{\sigma}$. In the case of ${\rm Re\;}\mu_j> \lambda_j/2$, since 
\begin{align*}
&{\rm Re}(-s-(\mu_j+n)/\lambda_j)\leq-\sigma-({\rm Re\;}\mu_j)/\lambda_j<-1/2-\sigma,\\
&{\rm Re}(1-s+(\overline{\mu_j}+n)/\lambda_j)\geq1-\sigma+{\rm Re\;}\mu_j/\lambda_j> 3/2-\sigma
\end{align*}
for $n\in\mathbb{Z}_{\geq0}$, we see that $(\chi_F^{(m-r)}/\chi_F)(s+w)$ does not have pole in $D_{\sigma}$. On the other hand, in the case of ${\rm Re\;}\mu_j\leq\lambda_j/2$, since  
\begin{align*}
&{\rm Re}(-s-(\mu_j+n)/\lambda_j)\in\begin{cases}[-\sigma-1/2,-\sigma], & n\in\mathbb{Z}_{\geq0}\cap\mathbb{Z}_{\leq[\lambda_j/2-{\rm Re\;}\mu_j]}, \\ (-\infty,-1/2-\sigma), & n\in\mathbb{Z}_{>[\lambda_j/2-{\rm Re\;}\mu_j]}, \end{cases} \\ 
&{\rm Re}(1-s+(\overline{\mu_j}+n)/\lambda_j)\in\begin{cases} [1-\sigma,3/2-\sigma], & n\in\mathbb{Z}_{\geq0}\cap\mathbb{Z}_{\leq[\lambda_j/2-{\rm Re\;}\mu_j]},   \\  (3/2-\sigma), & n\in\mathbb{Z}_{>[\lambda_j/2-{\rm Re\;}\mu_j]}, \end{cases} 
\end{align*}
from Lemma \ref{CHIDF} the points $w=-s-(\mu_j+n)/\lambda_j,\;1-s+(\overline{\mu_j}+n)/\lambda_j$ for $n\in\mathbb{Z}_{\geq0}\cap\mathbb{Z}_{\leq[\lambda_j/2-{\rm Re\;}\mu_j]}$
are pole of order $(m-r)$ for $(\chi_F^{(m-r)}/\chi_F)(s+w)$ in $D_\sigma$. Therefore  
\begin{align}
\prod_{j=1}^q(A_j(s+w)\overline{A_j}(1-(s+w)))^{m-r}\frac{\chi_F^{(m-r)}}{\chi_F}(s+w) \label{GCRR}
\end{align}
is holomorphic in $D_{\sigma}$. Combining \eqref{GARR}--\eqref{GCRR} we find that the integrand of \eqref{HSDEF} is holomorphic $w\in D_\sigma\setminus\{0\}$. 

Next we shall show $I_r(v)\to0$ when $v\to\infty$. Trivial estimate gives  
\begin{align}
\frac{g_{F}(s+w)}{g_{F}(s)}\ll|v|^{2(p_{F}+m)+(m+1)f_F} \label{GGEE}
\end{align}
where $f_F=2\sum_{j=1}^q\max\{0,[\lambda_j/2-{\rm Re\:}\mu_j]\}$. 
To obtain an estimate of $F^{(r)}(s+w)$ in $w\in D_\sigma$, we shall consider estimates of $\chi_F(s)$ and $(\chi_F^{(r)}/\chi_F)(s)$. By the same method of \eqref{G1A3} and $it=|t|e^{i\frac{\pi}{2}{\rm sgn}(t)}$, we have
\begin{align}
\frac{\Gamma(\lambda_j(1-s)+\overline{\mu_j})}{\Gamma(\lambda_js+\mu_j)}
&=\frac{(-i\lambda_jt)^{\lambda_j(1-s)+\overline{\mu_j}-1/2}e^{i\lambda_jt}}{(i\lambda_j t)^{\lambda_js+\mu_j-1/2}e^{-i\lambda_jt}}(1+O(|t|^{-1}))\notag\\
&=(\lambda_j|t|)^{\lambda_j(1-2s)-2i{\rm Im\:}\mu_j}e^{i(2\lambda_j+\frac{\pi}{2}(-\lambda_j-2{\rm Re\:}\mu_j+1){\rm sgn}(t))}(1+O(|t|^{-1}))\notag\\
&=(\lambda_j|t|)^{\lambda_j(1-2\sigma)}e^{i\theta_j(t)}(1+O(|t|^{-1}))\label{GFAF0}
\end{align}
where $\theta_j(t)=2\lambda_j+{\rm sgn}(t)\cdot(-\lambda_j-2{\rm Re\:}\mu_j+1)\pi/2-\log(\lambda_j|t|)^{2(\lambda_j+{\rm Im\:}\mu_j)}$. Hence we obtain 
\begin{align}
\chi_F(s)=&\omega {C_F}^{\frac{1}{2}-\sigma}e^{i\theta_F(t)}|t|^{d_F(\frac{1}{2}-\sigma)}(1+O(|t|^{-1}))\label{CHI0E}
\end{align}
where $\theta_F(t)=d_F+{\rm sgn}(t)\cdot(-d_F/2-e_F+q)\pi/2-\log(C_F'|t|^{d_F+e_F})$ and $C_F'=(Q\prod_{j=1}^q\lambda_j^{\lambda_j+{\rm Im\:}\mu_j})^2$. Combining \eqref{FDFE}, \eqref{CHI0E} and Lemma \ref{CHIDF} we have 
\begin{align*}
F^{(r)}(s)\ll |t|^{d_F(\frac{1}{2}-\sigma)}\sum_{j=0}^r(\log|t|)^{r-j}|F^{(j)}(1-\sigma+it)|\ll |t|^{d_F(\frac{1}{2}-\sigma)}(\log|t|)^r 
\end{align*}
for $s\in\mathbb{C}: \sigma\in\mathbb{R}_{<0}, |t|\gg1$. Phragm\'{e}n-Lindel\"{o}f theorem implies 
\begin{align}
F^{(r)}(s+w)\ll|v|^{\frac{d_F}{2}(\frac{3}{2}-(\sigma+u))}(\log|v|)^{r} \label{DFEE}
\end{align}
uniformly for $u\in[-1/2-\sigma,3/2-\sigma]$. By \eqref{GGEE}, \eqref{DFEE}, Lemmas \ref{CHIDF}, \ref{GFEST} we get
\begin{align*}
I_r(v)
&\ll\int_{-1/2-\sigma}^{3/2-\sigma}|v|^{2(p_F+m)+(m+1)f_F}\frac{\|\varphi^{(l+1)}\|_1}{|v|^{l+1}}(\log|v|)^r\times\\
&\quad\;\times |v|^{\frac{d_F}{2}(\frac{3}{2}-(\sigma+u))}(\log|v|)^{m}|v|^{\frac{d_F(\sigma+u)+e_F-q}{2}}du\\
&\ll|v|^{\frac{3d_F}{4}+\frac{e_F-q}{2}+2(p_F+m)+(m+1)f_F-l-1}(\log|v|)^m\|\varphi^{(l+1)}\|_1
\end{align*}
when $|v|\gg|t|$. Choosing $l\in\mathbb{Z}_{>M_F}$ we find that $I_r(v)\to0$ when $v\to\infty$, where $M_F=3d_F/4+(e_F-q)/2+2(p_F+m)+(m+1)f_F$. Cauchy's integral theorem gives
\begin{align}
F^{(m)}(s)=&\frac{1}{2\pi i}\left(\int_{(3/2-\sigma)}-\int_{(-1/2-\sigma)}\right)\frac{g_{F}(s+w)}{g_{F}(s)}\frac{K_\varphi(w)}{w}F^{(m)}(s+w)\times\notag\\
&\times\prod_{j=1}^q\frac{\Gamma(\lambda_j(s+w)+\mu_j)}{\Gamma(\lambda_js+\mu_j)}(Q^{\frac{2}{d_F}}xe^{-i\frac{\pi}{2}{\rm sgn}(t)})^{\frac{d_F}{2}w}dw. \label{FMRES}
\end{align}
Here the first term of right-hand side of \eqref{FMRES} is
\begin{align}
=G_m(s;x,\varphi). \label{FMRES1}
\end{align}
Since \eqref{FCHI} implies  
\begin{align*}
\frac{\chi_F(s+w)}{\chi_F(s)}=\frac{1}{Q^{2w}}\prod_{j=1}^q\frac{\Gamma(\lambda_j(1-s-w)+\overline{\mu_j})}{\Gamma(\lambda_j(1-s)+\overline{\mu_j})}\times \prod_{j=1}^q\frac{\Gamma(\lambda_js+\mu_j)}{\Gamma(\lambda_j(s+w)+\mu_j)},
\end{align*}
combining this formula and \eqref{FDFE} we obtain
\begin{align*}
&\hspace{-1em}F^{(m)}(s+w)\prod_{j=1}^q\frac{\Gamma(\lambda_j(s+w)+\mu_j)}{\Gamma(\lambda_js+\mu_j)}\\
=&\frac{\chi_F(s)}{Q^{2w}}\prod_{j=1}^q\frac{\Gamma(\lambda_1(1-s-w)+\overline{\mu_1})}{\Gamma(\lambda_1(1-s)+\overline{\mu_1})}\sum_{r=0}^m(-1)^{r}\binom{m}{r}\frac{\chi_F^{(m-r)}}{\chi_F}(s+w)\overline{F^{(r)}}(1-s-w).
\end{align*}
By replacing $w\mapsto-w$ and  using this formula, \eqref{KPW1} and $\overline{g_F}(1-s)=g_F(s)$, the second term of right-hand side of \eqref{FMRES} is 
\begin{align}
=&\chi_F(s)\sum_{r=0}^m(-1)^{r}\binom{m}{r}\times\frac{1}{2\pi i}\int_{(3/2-(1-\sigma))}\frac{\overline{g_{F}}(1-s+w)}{\overline{g_{F}}(1-s)}\frac{K_{\varphi_0}(w)}{w}\times\notag\\
&\times\overline{F^{(r)}}(1-s+w)\frac{\chi_F^{(m-r)}}{\chi_F}(1-(1-s+w))\prod_{j=1}^q\frac{\Gamma(\lambda_j(1-s+w)+\overline{\mu_j})}{\Gamma(\lambda_j(1-s)+\overline{\mu_j})}\times\notag\\
&\times(Q^{\frac{2}{d_F}}x^{-1}e^{-i\frac{\pi}{2}{\rm sgn}(-t)})^{\frac{d_F}{2}w}dw\notag\\
=&\chi_F(s)\sum_{r=0}^m(-1)^{r}\binom{m}{r}H_r(1-s;1/x,\varphi_0). \label{FMICL}
\end{align}
Therefore, from \eqref{FMRES} and  \eqref{FMICL} Proposition \ref{PRO1} is obtained.  
\end{proof}

Next applying residue theorem to the functions ${G}_r(s;x,\varphi)$ and ${H}_r(s;x,\varphi)$, then these functions are approximated as follows:
\begin{pro}\label{PRO2}
For any $F\in\mathcal{S}$, $m\in\mathbb{Z}_{\geq0}$, $r\in\left\{0,\dots,m\right\}$, $s=\sigma+it: \sigma\in[0,1], |t|\gg1$, $\varphi\in\mathcal{R}$, $l\in\mathbb{Z}_{>M_F}$ and $x, y\in\mathbb{R}_{>0}: \sqrt{C_F}(x|t|)^{\frac{d_F}{2}}=y$, we have   
\begin{align}
&\hspace{-1em}{G_r}(s;x,\varphi)\notag\\
=&\sum_{n\leq 2y}\frac{{a_F(n)}(-\log n)^r}{n^s}\sum_{j=0}^l\varphi^{(j)}\left(\frac{n}{y}\right)\left(-\frac{n}{y}\right)^j{\gamma_{j}^{(m-r)}}\left(s;\frac{1}{(\lambda_1^{\lambda_1}\cdots{\lambda_r}^{\lambda_r})^{\frac{2}{d_F}}|t|}\right)+\notag\\
&+O(y^{1-\sigma}(\log y)^{r+\max\{p_F-1,0\}}|t|^{-\frac{l}{2}}(\log|t|)^{m-r}\|\varphi^{(l+1)}\|) \label{AGRB} \\
&\hspace{-1em}{H_r}(s;x,\varphi)\notag\\
=&\sum_{n\leq 2y}\frac{\overline{a_F(n)}(-\log n)^r}{n^s}\sum_{j=0}^l\varphi^{(j)}\left(\frac{n}{y}\right)\left(-\frac{n}{y}\right)^j{\delta_{j}^{(m-r)}}\left(s;\frac{1}{(\lambda_1^{\lambda_1}\cdots{\lambda_r}^{\lambda_r})^{\frac{2}{d_F}}|t|}\right)+\notag\\
&+O(y^{1-\sigma}(\log y)^{r+\max\{p_F-1,0\}}|t|^{-\frac{l}{2}}(\log|t|)^{m-r}\|\varphi^{(l+1)}\|), \label{AHRB}
\end{align}
where $M_F$ is some positive constant, and ${{\gamma}_{j,r}}(s;\rho)$, ${{\delta}_{j,r}}(s;\rho)$ are given by \eqref{DEFGM}, \eqref{DEFDM} respectively.
\end{pro}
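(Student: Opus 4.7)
The plan is: reduce $G_r$ to a sum of single-$n$ contour integrals via Dirichlet-series expansion, extract $l+1$ explicit main terms by a Newton-type divided-difference expansion of the integrand combined with Mellin inversion, and then bound the residual. On $\operatorname{Re} w = 3/2-\sigma$ we have $\operatorname{Re}(s+w) = 3/2 > 1$, so $F^{(r)}(s+w) = \sum_n a_F(n)(-\log n)^r n^{-s-w}$ converges absolutely; the normalisation $\sqrt{C_F}(x|t|)^{d_F/2} = y$ rewrites $n^{-w}(Q^{2/d_F} x e^{-\pi i\operatorname{sgn}(t)/2})^{d_F w/2}$ as $(y/n)^w(\tau e^{-\pi i\operatorname{sgn}(t)/2})^{d_Fw/2}$ with $\tau = ((\lambda_1^{\lambda_1}\cdots\lambda_q^{\lambda_q})^{2/d_F}|t|)^{-1}$. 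Interchanging sum and integral (justified by the Stirling decay of Lemma \ref{GFEST}) gives $G_r(s;x,\varphi) = \sum_n (a_F(n)(-\log n)^r/n^s) I_n$, where $I_n = \frac{1}{2\pi i}\int_{(3/2-\sigma)} A(w)\frac{K_\varphi(w)}{w}(y/n)^w dw$ and $A(w)$ denotes the integrand of $\gamma_j^{(m-r)}(s;\tau)$ from \eqref{DEFGM} with the factor $1/(w(w+1)\cdots(w+j))$ stripped off.

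Next, set $A_0 = A$ and recursively define $A_{k+1}(w) = (A_k(w)-A_k(-k))/(w+k)$. A partial-fraction decomposition of $1/(w(w+1)\cdots(w+j))$ combined with residue calculus on $\mathcal{F}$ (permissible because the remaining poles of $A$ all lie outside $\mathcal{F}$ for $|t|\gg 1$, by the same verification as in the proof of Proposition \ref{PRO1}) gives $\gamma_j^{(m-r)}(s;\tau) = \sum_{k=0}^j (-1)^k A(-k)/(k!(j-k)!)$, whence a short induction on $k$ yields $A_k(-k) = \gamma_k^{(m-r)}(s;\tau)$ and the Newton-type identity
\[
A(w) = \sum_{j=0}^l \gamma_j^{(m-r)}(s;\tau) \prod_{i=0}^{j-1}(w+i) + \prod_{i=0}^l(w+i)\,A_{l+1}(w),
\]
with $A_{l+1}$ holomorphic inside $\mathcal{F}$. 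Substituting into $I_n$ and iterating integration by parts in the Mellin representation $K_\varphi(w) = w\int_0^\infty \varphi(u) u^{w-1}du$ (legal because $\varphi \equiv 1$ near $0$ gives $\varphi^{(j)}(0) = 0$ for $j \geq 1$) identifies $\prod_{i=1}^{j-1}(w+i)K_\varphi(w)$ as $(-1)^j$ times the Mellin transform of $u^j\varphi^{(j)}(u)$. Mellin inversion at $u = n/y$ then gives $\frac{1}{2\pi i}\int_{(3/2-\sigma)} \prod_{i=0}^{j-1}(w+i)K_\varphi(w)w^{-1}(y/n)^w dw = (-n/y)^j\varphi^{(j)}(n/y)$ for every $j \geq 0$ (with empty products set to $1$, the $j=0$ case being ordinary Mellin inversion of $\widehat\varphi$). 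This produces the desired main-term sum, automatically restricted to $n \leq 2y$ since $\varphi^{(j)}(n/y) = 0$ for $n > 2y$.

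The residual $R_{n,l} = \frac{1}{2\pi i}\int_{(3/2-\sigma)} \prod_{i=1}^l(w+i) A_{l+1}(w) K_\varphi(w)(y/n)^w dw$ is handled by applying \eqref{KPW2} to convert $\prod_{i=1}^l(w+i)K_\varphi(w)$ into $(-1)^{l+1}\int_0^\infty\varphi^{(l+1)}(u)u^{w+l}du$ and swapping orders. For $n \leq 2y$ I shift the $w$-contour from $(3/2-\sigma)$ to $(-1/2-\sigma)$ inside $D_\sigma$ (legitimate since $A_{l+1}$ is holomorphic there) and bound $|A_{l+1}(w)|$ through the Cauchy-type representation $A_{l+1}(w) = \frac{1}{2\pi i}\oint_{\mathcal{F}} A(z)/((z-w)\prod_{i=0}^l(z+i))dz$: on $\mathcal{F}$ the factor $\tau^{d_F z/2}$ cancels the gamma-ratio growth by Lemma \ref{GFEST} (the $|z| \leq c_0\sqrt{|t|}$ case), Lemma \ref{CHIDF} supplies $(\log|t|)^{m-r}$, and $\prod(z+i)$ contributes $|t|^{(l+1)/2}$ in the denominator. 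For $n > 2y$, where the main sum vanishes, I keep the contour at $(3/2-\sigma)$ and use \eqref{KPW2} directly; the factor $1/\prod(w+i)$ gives absolute convergence when $l > M_F$, the same threshold appearing in Proposition \ref{PRO1}. Finally, \eqref{LANA} with partial summation yields $y^{-1/2-\sigma}\sum_{n\leq 2y} |a_F(n)|(\log n)^r n^{1/2} \ll y^{1-\sigma}(\log y)^{r+\max\{p_F-1,0\}}$, which combined with the pointwise bounds produces the asserted error. The proof of \eqref{AHRB} is verbatim the same with $\overline{a_F(n)}, \overline{\mu_j}, \overline{g_F}$, and $\delta_j^{(m-r)}$ in place of their unbarred counterparts.

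The cleanest step is the Newton identification $A_k(-k) = \gamma_k^{(m-r)}(s;\tau)$, which follows immediately from matching partial-fraction residues. The delicate part is the uniform-in-$n$ bound on $A_{l+1}$ along the shifted contour: one must split $|\operatorname{Im}w| \leq \sqrt{|t|}$ (apply Cauchy on $\mathcal{F}$) from $|\operatorname{Im}w| > \sqrt{|t|}$ (apply Lemma \ref{GFEST} directly, with the extra decay from $\prod(w+i)$ requiring $l > M_F$), and then combine with Landau's \eqref{LANA} to sum the $n$-dependence efficiently.
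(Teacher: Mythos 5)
Your main-term extraction is correct and takes a genuinely different route from the paper's: you expand $F^{(r)}(s+w)$ termwise, interpolate the analytic factor $A(w)$ at the nodes $0,-1,\dots,-l$ by Newton divided differences (the identification $A[0,-1,\dots,-j]=\gamma_j^{(m-r)}$ via residues on $\mathcal{F}$ is valid, since the poles of the gamma factors and of $(\chi_F^{(m-r)}/\chi_F)(1-(s+w))$ have imaginary part of size $|t|$ and so lie outside $\mathcal{F}$), and then recognize $\prod_{i=0}^{j-1}(w+i)K_\varphi(w)w^{-1}$ as the Mellin transform of $(-\rho)^j\varphi^{(j)}(\rho)$. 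The paper instead applies \eqref{KPW2} first, splits the Dirichlet series at $n\leq\rho y$ inside the $\rho$-integral, and performs the partial integration of $\int_{n/y}^{\infty}\varphi^{(l+1)}(\rho)\rho^{w+l}\,d\rho$ only on the closed contour $\mathcal{F}$, keeping the full factor $1/(w(w+1)\cdots(w+l))$ on the infinite vertical pieces. That structural difference is precisely where your error estimate has two genuine gaps.

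First, for $n>2y$ you keep the contour at $(3/2-\sigma)$ and invoke only absolute convergence. Near $v=0$ one has $|w(w+1)\cdots(w+l)|\asymp 1$, so the triangle inequality there yields a per-$n$ bound of order $(\log|t|)^{m-r}(y/n)^{3/2-\sigma}\|\varphi^{(l+1)}\|_1$ with no power of $|t|$ saved; summed over $n>2y$ this is larger than the claimed error by the factor $|t|^{l/2}$. The saving $|t|^{-l/2}$ in \eqref{AGRB} comes from forcing $|w|\gg\sqrt{|t|}$ on the entire contour, which is why the paper detours around $|w|\leq\sqrt{|t|}$ via the semicircle $C_2$. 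Second, for $n\leq 2y$ your tail bound on $|{\rm Im}\,w|>\sqrt{|t|}$ controls only the piece $A(w)/\prod_{i=0}^{l}(w+i)$ of $A_{l+1}(w)$; but $A_{l+1}(w)=(A(w)-P_l(w))/\prod_{i=0}^{l}(w+i)$ with $P_l(w)=\sum_{j\leq l}\gamma_j^{(m-r)}\prod_{i<j}(w+i)$, and on the tails the dominant term is $P_l(w)/\prod_{i}(w+i)\sim\gamma_l^{(m-r)}/w$, of size $|t|^{-l/2}(\log|t|)^{m-r}/|v|$ by Lemma \ref{RCDE} --- not absolutely integrable. Since $\int\varphi^{(l+1)}(\rho)\rho^{w+l}\,d\rho$ has no decay in $v$ if only $\|\varphi^{(l+1)}\|_1$ is permitted in the constants, the triangle inequality cannot close this step: you need either the oscillation of $(y/n)^{iv}$ or, as the paper does, to confine the extraction of the $\varphi^{(j)}$ terms to the compact contour $\mathcal{F}$ so that no interpolating polynomial ever appears on the infinite lines. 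Both gaps are repairable, but as written the asserted error term does not follow.
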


\begin{proof}
In order to show \eqref{AGRB}, we use \eqref{KPW2} and write ${F^{(r)}}(s)=(\sum_{n\leq\rho y}+\sum_{n>\rho y})\times\times{a_F(n)}(-\log n)^rn^{-s}$ for ${\rm Re\;}s>1$ and $\rho\in\mathbb{R}_{>0}$. Then 
\begin{align}
{G_r}(s;x,\varphi)=I_1+I_2 \label{GHDIV}
\end{align}
where $I_1, I_2$ are given by
\begin{align}
I_1=&\frac{1}{2\pi i}\int_{(\frac{3}{2}-\sigma)}\frac{{g_{F}}(s+w)}{{g_{F}}(s)}
\left(\int_0^\infty\varphi^{(l+1)}(\rho)\rho^{w+l}\sum_{n\leq\rho y}\frac{{a_F(n)}(-\log n)^r}{n^{s+w}}d\rho\right)
\times\notag\\
&\times\frac{(-1)^{l+1}}{w\cdots(w+l)}{\frac{\chi_F^{(m-r)}}{\chi_F}}(s+w)\prod_{j=1}^q\frac{\Gamma(\lambda_j(s+w)+{\mu_j})}{\Gamma(\lambda_1s+{\mu_1})}(Q^{\frac{2}{d_F}}xe^{-i\frac{\pi}{2}{\rm sgn}(t)})^{\frac{d_F}{2}w}dw, \label{J1DEF}\\
I_2=&\frac{1}{2\pi i}\int_{(\frac{3}{2}-\sigma)}\frac{{g_{F}}(s+w)}{{g_{F}}(s)}\left(\int_0^\infty\varphi^{(l+1)}(\rho)\rho^{w+l}\sum_{n>\rho y}\frac{{a_F(n)}(-\log n)^r}{n^{s+w}}d\rho\right)\times\notag\\
&\times\frac{(-1)^{l+1}}{w\cdots(w+l)}{\frac{\chi_F^{(m-r)}}{\chi_F}}(s+w)\prod_{j=1}^q\frac{\Gamma(\lambda_j(s+w)+{\mu_j})}{\Gamma(\lambda_js+{\mu_j})}
(Q^{\frac{2}{d_F}}xe^{-i\frac{\pi}{2}{\rm sgn}(t)})^{\frac{d_F}{2}w}dw. \label{J2DEF}
\end{align}
To approximate $I_1$ and $I_2$, we define $L_{\pm j}$, $C_j$ ($j=1,2$) as 
\begin{align*}
L_{\pm j}=\{\sigma_j\pm iv\mid v\in[\sqrt{t},\infty)\}, \quad
C_j=\{\sigma_j+\sqrt{|t|}e^{-i\pi(\pm1/2+\theta)}\mid \theta\in[0,1]\},
\end{align*}
respectively, where $\sigma_1=-1/2-\sigma$ and $\sigma_2=3/2-\sigma$. The residue theorem gives 
\begin{align}
I_1=I_1'+{\rm Res\;}(I_1,\mathcal{F}), \quad I_2=I_2' \label{IJDEF}
\end{align}
where $I_1', I_2'$ are \eqref{J1DEF} replaced by $L_{-1}+C_1+L_{+1}$, $L_{-2}+C_2+L_{+2}$ respectively, and ${\rm Res\;}(I_1,\mathcal{F})$ is the sum of residue for the integrand of \eqref{J1DEF} in $\mathcal{F}$. Here by using the result 
\begin{align*}
\int_\mu^\infty\varphi^{(l+1)}(\rho)\rho^{w+l}d\rho=&\sum_{j=0}^l\varphi^{(l-j)}(\mu)\mu^{w+l-j}(-1)^{l-j}(w+l-j+1)\cdots(w+l)+\\
&+(-1)^{l+1}w\cdots(w+l)\int_\mu^\infty\varphi(\rho)\rho^{w-1}d\rho
\end{align*}
for $\mu\in\mathbb{R}_{\geq0}$ (see p.337 of \cite{GD1}) and the residue theorem, ${\rm Res\;}(I_1,\mathcal{F})$ is written as  
\begin{align}
&\hspace{-1em}{\rm Res\;}(I_1,\mathcal{F})\notag\\
=&\sum_{n\leq2y}\frac{{a_F(n)}(-\log n)^r}{n^s}\times\frac{1}{2\pi i}\int_{\mathcal{F}}\frac{{g_F}(s+w)}{{g_F}(s)}\frac{(-1)^l}{w\cdots(w+l)}{\frac{\chi_F^{(m-r)}}{\chi_F}}(s+w)\times\notag\\
&\times\left(\int_{\frac{n}{y}}^\infty\varphi^{(l+1)}(\rho)\rho^{w+l}d\rho\right)\prod_{j=1}^q\frac{\Gamma(\lambda_j(s+w)+{\mu_j})}{\Gamma(\lambda_js+{\mu_j})}\left(Q^{\frac{2}{d_F}}\frac{x}{n^{\frac{2}{d_F}}}e^{-i\frac{\pi}{2}{\rm sgn}(t)}\right)^{\frac{d_F}{2}w}dw\notag\\
=&\sum_{n\leq2y}\frac{{a_F(n)}(-\log n)^r}{n^s}\sum_{j=0}^l\varphi^{(j)}\left(\frac{n}{y}\right)\left(-\frac{n}{y}\right)^j{\gamma_{j}^{(m-r)}}\left(s;Q^{\frac{2}{d_F}}\frac{x}{y^{\frac{2}{d_F}}}\right). \label{REI1}
\end{align}
  
Next we shall estimate $I_1'$ and $I_2'$ as the error term of approximate functional equation. Partial summation and the assumption \eqref{LANA} give 
\begin{align}
&\int_{0}^{\infty}\varphi^{(l+1)}(\rho)\rho^{w+l}\sum_{n\leq\rho y}\frac{a_F(n)(-\log n)^r}{n^{s+w}}d\rho\notag\\
&\ll\int_{1/2}^{2}|\varphi^{(l+1)}(\rho)|\rho^{u+l}\left( \frac{(\log\rho{y})^{r+\max\{p_F-1,0\}}}{(\rho y)^{\sigma+u-1}}+\int_1^{\rho{y}}\frac{(\log u)^{r+\max\{p_F-1,0\}}}{u^{\sigma+u}}du \right)d\rho\notag\\
&\ll y^{1-(\sigma+u)}(\log y)^{r+\max\{p_F-1,0\}}\|\varphi^{(l+1)}\|_1\label{IJGEA}
\end{align}
for ${\rm Re}(s+w)\leq-1/2$ and 
\begin{align}
&\int_{0}^{\infty}\varphi^{(l+1)}(\rho)\rho^{w+l}\sum_{n>\rho y}\frac{a_F(n)(-\log n)^r}{n^{s+w}}d\rho\notag\\
&\ll \int_{1/2}^2|\varphi^{(l+1)}(\rho)|\rho^{u+l}\left(\int_{\rho{y}}^{\infty}\frac{(\log u)^{r+\max\{p_F-1,0\}}}{u^{\sigma+u}}du\right)d\rho\notag\\
&\ll y^{1-(\sigma+u)}(\log y)^{r+\max\{p_F-1,0\}}\|\varphi^{(l+1)}\|_1. \label{IJGEB}
\end{align}
for ${\rm Re}(s+w)\geq3/2$. From Lemma \ref{CHIDF} and \ref{GFEST} we get the following estimate: 
\begin{align}
&\frac{(-1)^l}{w\cdots(w+l)}{\frac{\chi_F^{(m-r)}}{\chi_F}}(1-(s+w))\prod_{j=1}^q\frac{\Gamma(\lambda_j(s+w)+{\mu_j})}{\Gamma(\lambda_js+{\mu_j})}(Q^{\frac{2}{d_F}}xe^{-i\frac{\pi}{2}{\rm sgn}(t)})^{\frac{d_F}{2}w} \notag\\
&\ll \begin{cases} \displaystyle \frac{(\log|t|)^{m-r}}{|t|^{\frac{l+1}{2}}}(\sqrt{C_F}(x|t|)^{\frac{d_F}{2}})^u, & w\in C_1\cup C_2\cup\mathcal{F}, \\ \displaystyle \frac{(\log|v|)^{m-r}}{|v|^{l+1}}\dfrac{(1+|t+v|)^{\frac{d_F(\sigma+u)+e_F-q}{2}}}{|t|^{\frac{d_F\sigma+e_F-q}{2}}}(\sqrt{C_F}x^{\frac{d_F}{2}})^u, & w\in L_{\pm1}\cup L_{\pm2}. \end{cases} \label{IJGE1}
\end{align}
Trivial estimate gives 
\begin{align}
\frac{{g_F}(s+w)}{{g_F}(s)}\ll\begin{cases} 1, & w\in C_1\cup C_2\cup \mathcal{F}, \\ \dfrac{(1+|t+v|)^{2(m+p_{F})+(m+1)f_F}}{|t|^{2(m+p_{F})+(m+1)f_F}}, & w\in L_{\pm1}\cup L_{\pm2}. \end{cases} \label{IJGE3}
\end{align}
Hence by combining \eqref{IJGEA}--\eqref{IJGE3}, $I_1'$ is estimated as 
\begin{align}
I_1'&\ll\int_{C_{1}}\frac{(\log|t|)^{m-r}}{|t|^{\frac{l+1}{2}}}(\sqrt{C_F}(x|t|)^{\frac{d_F}{2}})^uy^{1-(\sigma+u)}(\log y)^{r+\max\{p_F-1,0\}}\|\varphi^{(l+1)}\|_1|dw|+\notag\\
&\quad\;+\int_{L_{\pm1}}\frac{(\log|v|)^{m-r}}{|v|^{l+1}}\frac{(1+|t+v|)^{\frac{d_F(\sigma+u)+e_F-q}{2}+2(m+p_{F})+(m+1)f_F}}{|t|^{\frac{d_F(\sigma+u)+e_F-q}{2}+2(m+p_{F})+(m+1)f_F}}\times \notag\\
&\quad\;\times(\sqrt{C_F}(x|t|)^{\frac{d_F}{2}})^{u} y^{1-(\sigma+u)}(\log y)^{r+\max\{p_F-1,0\}}\|\varphi^{(l+1)}\|_1dv\notag\\
&\ll y^{1-\sigma}(\log y)^{r+\max\{p_F-1,0\}}|t|^{-\frac{l}{2}}(\log|t|)^{m-r}\|\varphi^{(l+1)}\|_1 \label{JDE2}
\end{align}
under the condition $\sqrt{C_F}(x|t|)^{\frac{d_F}{2}}=y$, where the following estimate was used:  
\begin{align*}
&\left(\int_{\pm\sqrt{|t|}}^{\pm\frac{|t|}{2}}+\int_{\pm\frac{|t|}{2}}^{\pm2|t|}+\int_{\pm2|t|}^{\pm\infty}\right) 
\frac{(1+|t+v|)^{M_F-d_F}}{|t|^{M_F-d_F}}\frac{(\log|v|)^{m-r}}{|v|^{l+1}}dv
=:J_1+J_2+J_3.
\end{align*}
Since $1\ll1+|t+v|\ll|t|$ when $v\in[-2|t|,-|t|/2]\cup[|t|/2,2|t|]$ and 
\begin{align*}
1+|t+v|\asymp\begin{cases} |t| & \text{when\;} v\in[-|t|/2,-\sqrt{|t|}]\cup[\sqrt{|t|},|t|/2], \\  |v| & \text{when\;} v\in(-\infty,-2|t|]\cup[2|t|,\infty), \end{cases}
\end{align*}
$J_j$ ($j=1,2,3$) were estimated as
\begin{align*}
J_1&\ll(\log|t|)^{m-r}\int_{\pm\sqrt{|t|}}^{\pm\frac{|t|}{2}}\frac{dv}{|v|^{l+1}}\ll|t|^{-\frac{l}{2}}(\log|t|)^{m-r},\\
J_2&\ll|t|^{\max\{0,d_F-M_F\}-(l+1)}(\log|t|)^{m-r}\int_{\pm\frac{|t|}{2}}^{\pm2|t|}\frac{dv}{(1+|t+v|)^{\max\{0,d_F-M_F\}}}\\
&\ll|t|^{-l+\max\{0,d_F-M_F\}}(\log|t|)^{m-r}\ll|t|^{-\frac{l}{2}}(\log|t|)^{m-r},\\
J_3&\ll\frac{1}{|t|^{M_F-d_F}}\int_{\pm2|t|}^{\pm\infty}
\frac{(\log|v|)^{m-r}}{|v|^{l+1-M_F+d_F}}dv\ll|t|^{-l}(\log|t|)^{m-r},
\end{align*}
where $l$ was chosen as $l\in\mathbb{Z}_{\geq 2\max\{0,d_F-M_F\}}$. By the same discussion of estimate of $I_1'$, the estimate of $I_2'$ is obtained:
\begin{align}
I_2'\ll& y^{1-\sigma}(\log y)^{r+A}|t|^{-\frac{l}{2}}(\log|t|)^{m-r}\|\varphi^{(l+1)}\|_1. \label{JDE1} 
\end{align}
Therefore combining \eqref{GHDIV}--\eqref{REI1}, \eqref{JDE2}--\eqref{JDE1} we obtain \eqref{AGRB}. Since we can obtain \eqref{AHRB} from the same discussion in the above, Proposition \ref{PRO2} is showed.  
\end{proof}

Finally, for any $x\in\mathbb{R}_{>0}$ we choose parameters $y_1, y_2\in\mathbb{R}_{>0}$ such that $\sqrt{C_F}(x|t|)^{\frac{d_F}{2}}\\=y_1$ and $\sqrt{C_F}(x^{-1}|t|)^{\frac{d_F}{2}}=y_2$, that is, $y_1y_2=C_F|t|^{d_F}$. By combining Propositions \ref{PRO1}, \ref{PRO2} and using \eqref{CHI0E}, $F^{(m)}(s)$ is approximated as
\begin{align*}
&\hspace{-1em}
F^{(m)}(s)\notag\\
=&\sum_{n\leq 2y_1}\frac{a_F(n)(-\log n)^m}{n^s}\sum_{j=0}^l\varphi^{(j)}\left(\frac{n}{y_1}\right)\left(-\frac{n}{y_1}\right)^j\gamma_{j}^{(0)}\left(s;\frac{1}{(\lambda_1^{\lambda_1}\cdots\lambda_q^{\lambda_q})^{\frac{2}{d_F}}|t|}\right)+\notag\\
&+\chi_F(s)\sum_{r=0}^m(-1)^r\binom{m}{r}\sum_{n\leq 2y_2}\frac{\overline{a_F(n)}(-\log n)^r}{n^{1-s}}\sum_{j=0}^r\varphi_0^{(j)}\left(\frac{n}{y_2}\right)\left(-\frac{n}{y_2}\right)^j\times\notag\\
&\times{\delta}_{j}^{(m-r)}\left(1-s;\frac{1}{(\lambda_1^{\lambda_1}\cdots\lambda_q^{\lambda_q})^{\frac{2}{d_F}}|t|}\right)+O(y_1^{1-\sigma}(\log y_1)^{m+A}|t|^{-\frac{l}{2}}\|\varphi^{(l+1)}\|_1)+\notag\\
&+O\left(y_2^\sigma|t|^{d_F(\frac{1}{2}-\sigma)-\frac{l}{2}}\|\varphi_0^{(l+1)}\|_1\sum_{r=0}^m(\log y_2)^{r+A}(\log|t|)^{m-r}\right). 
\end{align*}
Using Lemma \ref{RCDE} and dividing sums of $j\in\mathbb{Z}_{\geq0}$ to term of $j=0$ and sum of $j\in\mathbb{Z}_{\geq1}$, we complete proof of Theorem \ref{THM1}. 

\section{Proof of Theorem \ref{THM2}}\label{THM2P}
In order to prove Theorem \ref{THM2} from the approximate functional equation containing characteristic functions, we use Lemma \ref{CFF}. For any functions $X:[0,\infty)\to\mathbb{R}$, we define $M_{X}(s)$ to 
\begin{align*}
M_{X}(s):=&\sum_{n=1}^\infty\frac{a_F(n)(-\log n)^m}{n^s}X\left(\frac{n}{y_1}\right)+\\ &+\sum_{r=0}^m(-1)^r\binom{m}{r}\chi_F^{(m-r)}(s)\sum_{n=1}^\infty\frac{\overline{a_F(n)}(-\log n)^r}{n^{1-s}}X_0\left(\frac{n}{y_2}\right). 
\end{align*}
Replacing $\varphi\mapsto\varphi_{\alpha} \;(\alpha\in\mathbb{R}_{\geq0})$ in Theorem \ref{THM1} we can write 
\begin{align}
F^{(m)}(s)=M_{\varphi_{\alpha}}(s)+R_{\varphi_\alpha}(s)=M_\xi(s)+O(M_{\varphi_\alpha-\xi}(s)+R_{\varphi_\alpha}(s)) \label{NAF1}
\end{align}
Here $M_\xi(s)$ and $M_{\varphi_\alpha-\xi}(s)+R_{\varphi_\alpha}(s)$ are written as 
\begin{align}
M_{\xi}(s)=\sum_{n\leq y}\frac{a_F(n)(-\log n)^m}{n^s}+\sum_{r=0}^m(-1)^r\binom{m}{r}\chi_F^{(m-r)}(s)\sum_{n\leq y}\frac{\overline{a_F(n)}(-\log n)^m}{n^{1-s}} \label{NAF2}
\end{align}
and 
\begin{align*}
M_{\varphi_\alpha-\xi}(s)+R_{\varphi_\alpha}(s)
=&\:E(s)+\sum_{n=1}^\infty\frac{a_F(n)(-\log n)^m}{n^s}S_{0}\left(\frac{n}{y_1}\right)+\\
&+\chi_F(s)\sum_{r=0}^m(-1)^r\binom{m}{r}\sum_{n=1}^\infty\frac{\overline{a_F(n)}(-\log n)^r}{n^{1-s}}T_{m-r}\left(\frac{n}{y_2}\right)
\end{align*}
respectively, where $E(s)$, $S_{0}(\rho)$ and $T_{r}(\rho)$ are given by 
\begin{align*}
E(s)&=O\left(y_1^{1-\sigma}(\log y_1)^{m+\max\{p_F-1,0\}}|t|^{-\frac{l}{2}}\|\varphi_\alpha^{(l+1)}\|_1\right)+\notag\\
&\quad+O\left(y_2^\sigma|t|^{d_F(\frac{1}{2}-\sigma)-\frac{l}{2}}\|\varphi_{0\alpha}^{(l+1)}\|_1{\textstyle\sum_{r=0}^m}(\log y_2)^{r+\max\{p_F-1,0\}}(\log|t|)^{m-r}\right).\\
S_0(\rho)&=(\varphi_\alpha-\xi)(\rho)+\sum_{j=1}^l\varphi_\alpha^{(j)}(\rho)(-\rho)^j\gamma_{j}^{(0)}\left(s;\frac{1}{(\lambda_1^{\lambda_1}\cdots\lambda_r^{\lambda_r})^{\frac{2}{d_F}}|t|}\right),\\
T_{m-r}(\rho)&=(\varphi_{0\alpha}-\xi)(\rho)\frac{\chi_F^{(m-r)}}{\chi_F}(s)+
\\&\quad+
\sum_{j=1}^l\varphi_{0\alpha}^{(j)}(\rho)(-\rho)^j\delta_{j}^{(m-r)}\left(1-s;\frac{1}{(\lambda_1^{\lambda_1}\cdots\lambda_r^{\lambda_r})^{\frac{2}{d_F}}|t|}\right),
\end{align*}
Since $S_{0}(\rho)=0$ and $T_{m-r}(\rho)=0$ for $\rho\in[0,(1+|t|^{-\alpha})^{-1}]\cup[1+|t|^{-\alpha},\infty)$ by Lemmas \ref{RCDE} and \ref{CFF}, we have 
\begin{align}
E(s)&\ll y_1^{1-\sigma}(\log y_1)^{m+\max\{p_F-1,0\}}|t|^{(\alpha-\frac{l}{2})l}+\notag \\
&\quad+y_2^\sigma|t|^{d_F(\frac{1}{2}-\sigma)+(\alpha-\frac{1}{2})l}\textstyle\sum_{r=0}^m(\log y_2)^{r+\max\{p_F-1,0\}}(\log|t|)^{m-r}\label{MUUE}
\end{align}
for any $\alpha\in\mathbb{R}_{\geq0}$, and 
\begin{align}
S_0(\rho)&\ll1+\sum_{j=1}^l|t|^{\alpha j}|t|^{-\frac{j}{2}}\ll 1,\label{MS0E}\\
T_{m-r}(\rho)&\ll(\log|t|)^{m-r}+\sum_{j=1}^l|t|^{\alpha j}|t|^{-\frac{j}{2}}(\log|t|)^{m-r}\ll(\log|t|)^{m-r}
\label{MTRE}
\end{align}
for $\rho\in[(1+|t|^{-\alpha})^{-1}, 1+|t|^{-\alpha}]$ under the condition $\alpha\in[0,1/2]$. Now we choose $\alpha=1/2-\varepsilon$ and $l\in\mathbb{Z}_{\geq1/(2\varepsilon)}$. By the condition (d) of Selberg class and the estimates  
\eqref{CHI0E}, \eqref{MUUE}--\eqref{MTRE} and $(1+|t|^{-\alpha})y_1-(1+|t|^{-\alpha})^{-1}y_1\leq 2|t|^{-\alpha}y_1$, $M_{\varphi_\alpha-\xi}(s)+R_{\varphi_\alpha}(s)$ is estimated as 
\begin{align}
&\hspace{-1em}M_{\varphi_\alpha-\xi}(s)+R_{\varphi_\alpha}(s)\notag\\
\ll&\;y_1^{1-\sigma}(\log y_1)^{m+\max\{p_F-1,0\}}|t|^{(\alpha-\frac{1}{2})l}+\notag \\
&+y_2^\sigma|t|^{d_F(\frac{1}{2}-\sigma)+(\alpha-\frac{1}{2})l}
\sum_{r=0}^m(\log y_2)^{r+\max\{p_F-1,0\}}(\log|t|)^{m-r}+\notag\\
&+\sum_{(1+|t|^{-\alpha})^{-1}y_1\leq n\leq(1+|t|^{-\alpha})y_1}\frac{|a_F(n)|(\log n)^m}{n^\sigma}+\notag\\
&+|t|^{d_F(\frac{1}{2}-\sigma)}\sum_{r=0}^m(\log|t|)^{m-r}\sum_{(1+|t|^{-\alpha})^{-1}y_2\leq n\leq(1+|t|^{-\alpha})y_2}\frac{|\overline{a_F(n)}|(\log n)^r}{n^{1-\sigma}}+\notag\\
\ll &\; y_1^{1-\sigma+\varepsilon}|t|^{-\frac{1}{2}}+y_2^{\sigma+\varepsilon}|t|^{d_F(\frac{1}{2}-\sigma)-\frac{1}{2}}.
\label{NAF3}
\end{align}
Hence combining \eqref{NAF1}, \eqref{NAF2} and \eqref{NAF3} we complete the proof of Theorem \ref{THM2}. 

\bibliographystyle{alpha}

\end{document}